\numberwithin{equation}{section}
\newtheorem{thm}[equation]{Theorem}
\newtheorem{lem}[equation]{Lemma}
\newtheorem{prop}[equation]{Proposition}
\newtheorem{cor}[equation]{Corollary}
\theoremstyle{definition}
\newtheorem{defn}[equation]{Definition}
\theoremstyle{remark}
\newtheorem{remark}[equation]{Remark}
\newtheorem{ex}[equation]{Example}
\newtheorem{rem}[equation]{Remark}
\newcommand{\one}{\ensuremath{\mathbf{v}}}
\newcommand{\two}{\ensuremath{\mathbf{w}}}
\renewcommand{\L}{\mathcal{L}}
\newcommand{\F}{\mathbb{F}}
\DeclareMathOperator{\Ima}{Im}
\def\Tor{\mathrm{Tor}}
\def\Sp{\mathrm{Sp}}
\def\CAlg{\mathrm{CAlg}}
\def\cS{\mathcal{S}}
\def\niceH{\mathcal{H}}
\def\Z{\mathbb{Z}}
\def\Q{\mathbb{Q}}
\def\leq{\leqslant}
\def\geq{\geqslant}
\def\ra{\rightarrow}
\def\vol{{\sf vol}}
\def\HH{\mathsf{HH}}
\def\THH{\mathsf{THH}}
\def\ie{\emph{i.e.}}
\def\id{\mathrm{id}}
\begin{document}
\title{Loday constructions on twisted products and on tori} 
\author[Hedenlund]{Alice Hedenlund}
\address{Department of Mathematics, University of Oslo, Box 1053,
  Blindern, NO - 0316 Oslo, Norway} 
\email{aliceph@math.uio.no}
\author[Klanderman]{Sarah Klanderman}
\address{Department of Mathematics, Michigan State University, 619 Red Cedar Rd,
    East Lansing, MI 48840, USA}
\email{klander2@msu.edu}
\author[Lindenstrauss]{Ayelet Lindenstrauss}
\address{Department of Mathematics, Indiana University, 831 East 3rd Street,
  Bloomington, IN 47405, USA}
\email{alindens@indiana.edu}
\author[Richter]{Birgit Richter}
\address{Fachbereich Mathematik der Universit\"at Hamburg,
Bundesstra{\ss}e 55, 20146 Hamburg, Germany}
\email{birgit.richter@uni-hamburg.de}
\author[Zou]{Foling Zou}
\address{Department of Mathmatics, University of Chicago, 5734 S
  University Ave, Chicago, IL 60637, USA} 
\email{foling@math.uchicago.edu}
\date{\today}

\keywords{torus homology, (higher) Hochschild homology, (higher)
  topological Hochschild homology, stability,
  twisted Cartesian products}
\subjclass[2000]{Primary 18G60; Secondary 55P43}
\begin{abstract}
  We develop a spectral sequence for the homotopy groups of Loday
  constructions with respect to twisted products in the case where the
  group involved is a constant simplicial group. We show that for
  commutative Hopf algebra spectra Loday constructions are stable,
  generalizing a result by Berest, Ramadoss and Yeung. 
  We prove that several truncated polynomial rings are
  \emph{not} multiplicatively stable by investigating their torus homology. 
\end{abstract}
\maketitle

\section*{Introduction}
When one studies commutative rings or ring spectra, important
homology theories are topological Hochschild or its higher
versions. These are specific examples of the Loday construction, whose
definition relies  
on the fact that commutative ring spectra are enriched 
in simplicial sets: for a simplicial set $X$ and a commutative ring
spectrum $R$ one can define 
the tensor $X \otimes R$ as a simplicial spectrum whose $n$-simplices are
\[ \bigwedge_{x \in X_n} R. \]  By slight abuse of notation $X \otimes
R$ also denotes the commutative ring spectrum that is 
the geometric realization of this simplicial spectrum. This
recovers topological 
Hochschild homology of $R$, $\THH(R)$, when $X=S^1$, and 
higher topological Hochschild homology, $\THH^{[n]}(R)$, for higher dimensional
spheres $S^n$. Tensoring
satisfies several properties \cite[VII, \S 2, \S 3]{ekmm}, two of which are: 
\begin{itemize}
\item If $X$ is
a homotopy pushout, 
$X = X_1 \cup^h_{X_0} X_2$, then the tensor product of $R$ with $X$
splits as a homotopy 
pushout in the category of commutative ring spectra which is the
derived smash product:  
\[ (X_1 \cup^h_{X_0} X_2) \otimes R \simeq (X_1 \otimes R)
  \wedge^L_{(X_0 \otimes R)} (X_2 \otimes R).\]

\item A product of simplicial sets $X \times Y$ gives rise to an iterated
tensor product: 
\[ (X \times Y) \otimes R \simeq X \otimes (Y \otimes R). \]
\end{itemize}

This last expression does not, however,  imply that calculating the
homotopy groups of $(X \times Y) \otimes R$ 
is easy. In particular, if one iterates the trace map from algebraic
K-theory to topological 
Hochschild homology $n$ times, one obtains a map
\[ K^{(n)}(R) = \underbrace{K(K(\ldots(K}_{n}(R))\ldots )) \ra
  \underbrace{(S^1 \times \ldots \times S^1)}_n \otimes R .\]
Since iterated K-theory is of interest 
in the context of chromatic red-shift, one would like to know as much
about $(S^1 \times \ldots \times S^1) \otimes R$ as possible.

In some good cases, the homotopy type of $X \otimes R$ only depends on the
suspension of $X$ in the sense that if $\Sigma X \simeq \Sigma Y$,
then one has $X \otimes R \simeq Y \otimes R$. 
This property is called \emph{stability}. Stability for instance holds for
Thom spectra $R$ that arise from an infinite loop map to the
classifying space $BGL_1(S)$ (see Theorem 1.1 of \cite{sch}), or for
$R = KU$ or $R = KO$ \cite[\S 4]{lr}.  

One can also work relative to a fixed commutative ring spectrum $R$
and consider commutative 
$R$-algebra spectra $A$ and ask whether $X \otimes_R A$ only depends
on the homotopy type of 
$\Sigma X$. In this paper, we will often work with coefficients: we
look at pointed simplicial 
sets $X$ and place a commutative $A$-algebra spectrum $C$ at the
basepoint of $X$. In other words, when $X$ is pointed then the
inclusion of the basepoint makes $X \otimes_R A$  into a commutative
$A$-algebra and we can look at  $\L_X^R(A;C) =(X \otimes_R A)\wedge_A
C$, the Loday construction with respect to $X$ of $A$ over $R$ with
coefficients in $C$. We call the pair $(A; C)$ \emph{stable} if the
homotopy type of 
$\L_X^R(A; C)$ only depends on the homotopy type of $\Sigma X$. Note
that the ring $R$ is not part the 
notation when we say that $(A;C)$ is stable although the question depends on the choice of $R$, so the context should specify the $R$ we are working over. We call the commutative $R$-algebra $A$
\emph{multiplicatively stable} as in 
\cite[Definition 2.3]{lr} if $\Sigma X \simeq \Sigma Y$ implies that
$\L_X^R(A) \simeq \L_Y^R(A)$ 
as commutative $A$-algebra spectra. If $A$ is multiplicatively stable,
then for any cofibrant 
commutative $A$-algebra $C$, the pair $(A; C)$ is stable (see
\cite[Remark 2.5]{lr}).

We investigate several algebraic examples, \ie, commutative ring
spectra that are 
Eilenberg Mac Lane spectra of commutative rings. For instance we show
that the pairs $(H\Q[t]/t^m; H\Q)$ are not stable for all $m \geq 2$,
extending a result by Dundas and Tenti 
\cite{DT}. We also prove integral and mod-$p$ versions of this result.

Work of Berest, Ramadoss and Yeung implies that the homotopy types of
$\L_X^{Hk}(HA; Hk)$ 
and $\L_X^{Hk}(HA)$ only depend on the homotopy type of $\Sigma X$ if
$k$ is a field and if 
$A$ is a commutative Hopf algebra over $k$. We generalize this result
to commutative Hopf algebra spectra. 

Moore introduced twisted cartesian products as simplicial models for
fiber bundles. We develop a 
Serre type spectral sequence for Loday constructions of twisted
cartesian products where the 
twisting is governed by a constant simplicial group. As a concrete
example we compute the Loday 
construction with respect to the Klein bottle for a polynomial algebra
over a field with 
characteristic not equal to $2$.

\subsection*{Content}
In Section \ref{sec:loday} we recall the definition of the Loday
construction and fix notation. 
Section \ref{sec:tcp} contains the construction of a spectral sequence
for the homotopy groups of 
Loday constructions with respect to twisted cartesian
products. Our results on commutative Hopf algebra spectra can be found
in Section \ref{sec:hopf}. In Section \ref{sec:trunc} we 
prove that truncated 
polynomial algebras of the form $\Q[t]/t^m$ and $\Z[t]/t^m$ for $m
\geq 2$ are not multiplicatively 
stable by
comparing the Loday construction of tori to the Loday construction of
a bouquet of spheres 
corresponding to the cells of the tori. We also show that for $2 \leq
m < p$ the $\F_p$-algebra 
$\F_p[t]/t^m$ is not stable. 
\subsection*{Acknowledgements}
We thank the organizers of the third Women in Topology workshop, Julie
Bergner, Ang\'elica Osorno, and  Sarah Whitehouse, and  also the
Hausdorff Institute of Mathematics for their hospitality during the
week of the workshop. We thank the Hausdorff Research Institute for
Mathematics, grants NSF-DMS 1901795 and NSF-HRD 1500481---AWM ADVANCE,
and the Foundation Compositio Mathematica for their support of the
workshop.   We thank  
Maximilien P\'eroux for help with coalgebras in spectra, Inbar Klang for
a helpful remark about norms, Mike Mandell for a helpful discussion on
$E_n$-spaces, Jelena Grbi\'c for pointing out  
\cite{Schaper} to us, and Thomas Nikolaus for $\infty$-category  
support. AL was supported by Simons Collaboration Grant 359565. The last
two authors thank the Department of Mathematics at Indiana University
for its hospitality and BR thanks the Department of 
Mathematics at Indiana University for support as a short-term research
visitor in 2019.

\section{The Loday construction: basic features} 
\label{sec:loday}

We recall some definitions
concerning the Loday construction and we fix notation. 

For our work we can use any good symmetric monoidal category of
spectra whose category of commutative monoids is Quillen equivalent to
the category of $E_\infty$-ring spectra, such as symmetric
spectra \cite{HSS}, orthogonal spectra \cite{mm} or $S$-modules
\cite{ekmm}. As parts of the paper require us to work with a specific
model category we chose to work with the category of $S$-modules. 

Let $X$ be a finite pointed simplicial set and let $R \ra A \ra C$ be a sequence
of maps of commutative ring spectra. 

\begin{defn} \label{def:loday}
The \emph{Loday construction with respect to $X$ of
$A$ over $R$ with coefficients in $C$} is the simplicial commutative
augmented $C$-algebra spectrum $\L^R_X(A; C)$ given by 
\[ \L^R_X(A; C)_n = C \wedge \bigwedge_{x \in X_n \setminus *} A \]
where the smash products are taken over $R$.  
Here, $*$ denotes the basepoint of $X$ and we place a copy of $C$ at
the basepoint. 
The simplicial structure of $\L^R_X(A; C)$ 
is straightforward: Face maps $ d_i$ on $X$ induce multiplication in
$A$ or the $A$-action on $C$ if the basepoint is 
involved. Degeneracies $s_i$ on $X$ correspond to the insertion of
the unit maps $\eta_A \colon R \ra A$ over all $n$-simplices which
are not hit by $s_i\colon X_{n-1} \to X_n$.
\end{defn}

As defined above, $\L^R_X(A; C)$ is a simplicial commutative augmented
$C$-algebra spectrum. In the following we will always assume that $R$
is a cofibrant commutative $S$-algebra, $A$ is a cofibrant commutative
$R$-algebra and $C$ is a cofibrant commutative $A$-algebra. This
ensures that the homotopy type of $\L^R_X(A; C)$ is well-defined and
depends only on the homotopy type of $X$. 

\begin{remark} \label{originalLoday}
When $R \ra A \ra C$ is a sequence 
of maps of commutative rings, we can of course use the above
definition for $HR\to HA\to HC$. The original construction by
Loday \cite[Proposition 6.4.4]{loday} used  
\[C \otimes \bigotimes _{x \in X_n \setminus *} A \]
instead 
with the tensors taken over $R$ as the $n$-simplices in $\L^R_X(A;
C)$. 

This algebraic definition also makes sense if $R$ is a commutative
ring and $A\to C$ is a map of commutative simplicial $R$-algebras. 

It continues to work if $R$ is a commutative ring and $A\to C$ is a
map of graded-commutative $R$-algebras, with the $n$-simplices defined
as above, but the maps between them require a sign correction as terms
are pulled past each other---see \cite[Equation (1.7.2)]{pira-hodge}. 
\end{remark}

An important case is $X = S^n$. In this case we write $\THH^{[n],R}(A;
C)$ for $\L^R_{S^n}(A; C)$; this is the \emph{higher order topological
Hochschild homology of order $n$ of $A$ over $R$ with coefficients in
$C$}.

Let $k$ be a commutative ring, $A$ be a commutative $k$-algebra,
and $M$ be an $A$-module.  Then we define 
\[
  \THH^{[n],k}(A;M) := \L^{Hk}_{S^n}(HA;HM).
\]
If $A$ is flat over $k$, then $\pi_*\THH^{k}(A;M) \cong
\HH_*^{k}(A;M)$ \cite[Theorem IX.1.7]{ekmm} and this also holds for
higher order Hochschild homology in the sense of Pirashvili
\cite{pira-hodge}: $\pi_*\THH^{[n],k}(A;M) \cong  \HH_*^{[n],k}(A;M)$
if $A$ is $k$-flat \cite[Proposition 7.2]{blprz}. 

Given a commutative ring $A$ and an element
$a\in A$, we write $A/a$ instead of $A/(a)$.

\section{A spectral sequence for twisted cartesian products}
\label{sec:tcp} 
We will start by letting $R\to A$ be a map of commutative rings and we
study Loday constructions $ \L^R_B(A ^\tau)$ over a finite
simplicial set $B$, where $\tau$ indicates a twisting by a discrete
group $G$ that acts on $A$ via ring isomorphisms.  This construction
can be adapted as in Definition \ref{def:loday} and Remark
\ref{originalLoday}  to allow coefficients in an $A$-algebra $C$ if $B$
is 
pointed, and to the case  where $R\to A$ is a map of commutative ring
spectra, or $R$ is a commutative ring and $A$ is a graded-commutative
$R$-algebra 
or a simplicial commutative  $R$-algebra. 

If we have a twisted cartesian product (TCP) in the sense of 
\cite[Chapter IV]{may}  $E(\tau)=F\times_\tau B$ where the fiber $F$
is a simplicial $R$-algebra and the simplicial structure group $G$
acts on $F$ by simplicial $R$-algebra isomorphisms, it is possible to
generalize this definition of the Loday construction to  allow
twisting by a simplicial structure group, as expained in Definition \ref{def:twistedloday} below.

We show an example where such a TCP arises: if we start with a TCP of
simplicial sets $E(\tau) =F\times_\tau B$ with twisting in a
simplicial structure group $G$ acting on $F$ simplicially on the left
and with a map of commutative rings $R\to A$, we can use that twisting
to construct a TCP with fiber equal to the simplicial commutative $R$-algebra
$\L^R_F(A)$ and with the structure group $G$ acting on $\L^R_F(A)$ by
$R$-algebra isomorphisms.  In that situation, we get that 
\[\L^R_{E(\tau)} (A)\cong \L^R_B( \L^R_F(A) ^\tau),\]
which generalizes the fact that for a product, $\L^R_{F\times B}
(A)\cong \L^R_B( \L^R_F(A) ).$ 
If the structure group $G$ is discrete, \ie, if $G$ is a
constant simplicial group, $\L^R_{E(\tau)} (A)$ can be written as a
bisimplicial set and we get a spectral sequence for calculating its
homotopy groups.  

\begin{defn} \label{def:twistedloday} Let $B$  be a finite simplicial
  set, $R$ be a commutative ring, and $A$ be a commutative $R$-algebra
  (or a graded-commutative $R$-algebra, or a simplicial commutative 
  $R$-algebra).  Let $G$ be a discrete group acting on $A$ from the
  left via isomorphisms of $R$-algebras, and let $\tau$ be a function
  from the positive-dimensional simplices of $B$ to $G$ so that 
\begin{equation}\label{tauconst} 
  \begin{array}{rcll}
 \tau (b) & = & [\tau ( d_0 b)]^{-1} \tau( d_1 b)
  & \text{ for }  q>1, b \in B_q, \\ 
   \tau ( d_{i}b)  & = & \tau(b) & \text{ for } i\geq 2, q>1, b \in B_q,\\
\tau ( s_{i} b) & = & \tau ( b) & \text{ for } i \geq 1, q>0, b \in
                                  B_q, \text{ and } \\
    \tau (s_0 b) &= &e_G & \text{ for } q>0, b \in B_q.
\end{array}
\end{equation}
The \emph{twisted Loday construction with respect to $B$ of
$A$ over $R$ twisted by $\tau$} is the simplicial commutative (resp.,
graded-commutative, or bisimplicial commutative) $R$-algebra  
 $\L^R_B(A^\tau)$ given by
\[\L^R_B(A^\tau)_n = \L^R_{B_n}(A) =\bigotimes _{b \in B_n} A\]
where the tensor products are taken over $R$, with
\begin{align}
     d_0\left(\bigotimes_{b\in B_n} f_b\right)  =\bigotimes_{c\in B_{n-1}}
  g_c  &\text{ with } g_c=\prod_{b:  d_0 b = c}
         \tau(b)( f_b), \nonumber \\
      d_i\left(\bigotimes_{b\in B_n} f_b\right)  =\bigotimes_{c\in B_{n-1}}
  g_c  & \text{ with }  g_c=\prod_{b:  d_i b = c}  
         f_b \text{ for } \ 1 \le i \le n,   \text{ and
         } \nonumber\\ 
     s_i\left(\bigotimes_{b\in B_n} f_b\right)  = \bigotimes_{d\in B_{n+1}} h_d &
\text{ with }  h_d=\prod_{b: s_i b = d} f_b \text{ for }  0 \le i
         \le n.\nonumber  
\end{align}
 \end{defn}
We should think of the copy of $A$ sitting over a simplex $b\in B_n$
as sitting over its $0$th vertex, and of $\tau(b)$ as translating
between the $A$ over $b$'s $0$th vertex and the $A$ over $b$'s $1$st
vertex. 

\begin{lem} The definition above makes $\L^R_B(A^\tau)$ into a simplicial set.
\end{lem}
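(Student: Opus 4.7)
The strategy is to verify the simplicial identities
\[
  d_i d_j = d_{j-1} d_i \ (i<j), \quad
  s_i s_j = s_{j+1} s_i \ (i\leq j), \quad
  d_i s_j = \begin{cases} s_{j-1} d_i & i<j \\ \mathrm{id} & i\in\{j,j+1\} \\ s_j d_{i-1} & i>j+1 \end{cases}
\]
on $\L^R_B(A^\tau)$. All of the face and degeneracy maps are defined as $R$-algebra homomorphisms between tensor products of copies of $A$, and they are completely determined by their effect on a pure tensor $\bigotimes_{b\in B_n} f_b$, so it suffices to check each identity on such elements.

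The first observation is that every $d_i$ with $i\geq 1$ and every $s_i$ is defined by a formula of the shape $\bigotimes_c \prod_{b:\, \partial b = c} f_b$, with no appearance of $\tau$, where $\partial$ stands for the relevant face or degeneracy map on $B$. Consequently any composite of two such maps also has this shape, indexed by a twofold preimage of face/degeneracy maps on $B$; the corresponding simplicial identities for $\L_B^R(A^\tau)$ therefore follow \emph{directly} from the simplicial identities on the simplicial set $B$. In particular $d_i d_j = d_{j-1} d_i$ for $1\leq i<j$, the identities $s_i s_j = s_{j+1} s_i$ for $i\leq j$, the identities $d_i s_j = s_{j-1}d_i$, $s_j d_{i-1}$, or $\mathrm{id}$ whenever $i\geq 1$ and $j\geq 1$, and the identities $d_j s_0 = s_0 d_{j-1}$ for $j \geq 2$ require no work beyond this bookkeeping. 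The cases $d_1 s_0 = \mathrm{id}$ and $d_{j+1} s_j = d_j s_j = \mathrm{id}$ (for $j\geq 1$) reduce to the observation that for $d$ not in the image of $s_j$ the empty product gives $h_d = 1_A$, so such terms contribute trivially.

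The substantive cases are the ones where $d_0$ appears, since only $d_0$ invokes the twisting $\tau$; these are exactly where the hypotheses \eqref{tauconst} are used. I would verify four identities in turn, by expanding the formula for $d_0$ and then simplifying using the appropriate clause of \eqref{tauconst}:
\begin{itemize}
\item For $d_0 d_0 = d_0 d_1$, applied to $\bigotimes_{b\in B_n} f_b$ with $n \geq 2$, both sides are indexed by the common simplicial preimage $\{b : d_0 d_0 b = e\} = \{b : d_0 d_1 b = e\}$, and the two expressions agree after one applies $\tau(d_0 b)\,\tau(b) = \tau(d_1 b)$, which is the first line of \eqref{tauconst}.
\item For $d_0 d_i = d_{i-1} d_0$ with $i\geq 2$, the same indexing argument leaves us needing $\tau(d_i b) = \tau(b)$, which is the second line of \eqref{tauconst}.
\item For $d_0 s_j = s_{j-1} d_0$ with $j\geq 1$, after expanding, we need $\tau(s_j b) = \tau(b)$, which is the third line of \eqref{tauconst}, and we separately check (using injectivity of $s_{j-1}$ on $B_{n-1}$, and the fact that $d$ outside $\mathrm{Im}\, s_j$ contribute $\tau(d)(1_A) = 1_A$) that the indexing on both sides agrees.
\item For $d_0 s_0 = \mathrm{id}$, expanding gives a product of $\tau(s_0 b)(f_b) = e_G(f_b) = f_b$, using the fourth line of \eqref{tauconst}; again terms from $d \notin \mathrm{Im}\, s_0$ are $\tau(d)(1_A) = 1_A$.
\end{itemize}

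The main obstacle is purely organizational: one must be careful about which simplices $d \in B_{n+1}$ or $c \in B_{n-1}$ actually occur in the products, including the trivial contributions from simplices not in the image of $s_j$ (where an empty product $1_A$ appears and then $\tau(d)(1_A) = 1_A$ because $\tau(d)$ is a ring isomorphism). Once this indexing is kept straight, each of the four identities above is a direct consequence of the corresponding line of \eqref{tauconst}, and together with the identities not involving $d_0$ they exhaust the simplicial identities for $\L^R_B(A^\tau)$.
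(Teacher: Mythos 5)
Your proposal is correct and follows essentially the same strategy as the paper's proof: dispatch the identities not involving $d_0$ by noting those face and degeneracy maps contain no twist, then verify each $d_0$-identity ($d_0 d_0 = d_0 d_1$, $d_0 d_j = d_{j-1} d_0$ for $j>1$, $d_0 s_j = s_{j-1} d_0$ for $j\geq 1$, $d_0 s_0 = \mathrm{id}$) against the corresponding clause of Equation (2.1). The paper's argument is the same, just more terse about the indexing bookkeeping you spell out.
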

\begin{proof}
To check this we need only check the
relations involving $ d_0$, since the ones that do not involve
$\tau$ work in the same way that they do in the usual Loday
construction.  For $j>1$, we get $d_0 d_j= d_{j-1} d_0$ because in
both terms, for
any $c\in B_{n-2}$ we get the product over all $b\in B_n$ with
$ d_0 d_j b= d_{j-1} d_0 b = c$ of terms that
are either $\tau(b)(f_b)$ or $\tau( d_jb)(f_b)$. These are the
same by the condition in Equation (\ref{tauconst}) above.  For $j=1$,
we 
get the product over all $b\in B_n$ with $ d_0 d_1
b= d_{0} d_0 b = c$ of terms that are either
$\tau( d_1b)(f_b)$ or $\tau( d_0b)\tau(b)(f_b)$, which
again agree by Equation (\ref{tauconst}).  We get
$ d_0s_0=\mathrm{id}$ since $\tau(s_0b)=e_G$, and
$ d_0s_i=s_{i-1} d_o$ for $i>0$ since for those $i$,
$\tau(s_ib)=\tau(b)$.   
\end{proof}

Following Moore, May considers the following
simplicial version of a fiber bundle \cite[Definition 18.3]{may}: 
\begin{defn} \label{def:TCP}
Let $F$ and $B$ be simplicial sets and let $G$ be a simplicial group
which acts on $F$  from the left.  Let
 $\tau \colon B_q\to G_{q-1}$ for all $q>0$ be functions so that 
\[ \begin{array}{rcll}
     d_0 \tau (b) & = & [\tau ( d_0 b)]^{-1}
                              \tau( d_1 b)
     & \text{ for }  q>1, b \in B_q, \\
\tau (d_{i+1}b)    & = &  d_i \tau(b)  & \text{ for }  i \ge 1,
         q>1,  b \in B_q, \\
\tau (s_{i+1} b)   & = & s_i \tau (b) & \text{ for } i \ge 0, q > 0, b
                                        \in B_q, \text{ and }\\
    \tau (s_0 b) & = & e_q &\text{ for }  q>0, b \in B_q.
   \end{array} \]

The \emph{twisted Cartesian
  product (TCP)} $E(\tau)=F\times_\tau B$ is the simplicial set  whose
$n$-simplices are given by 
\[E(\tau)_n= F_n \times B_n, \]
 with simplicial structure maps
\begin{enumerate}[label=(\roman*)]
\item $ d_0(f,b) = (\tau (b) \cdot  d_0 f,  d_0 b)$,
\item $ d_i(f, b) = ( d_i f,  d_i b)$ \  $\forall
  i>0$, \text{ and }
\item $s_i (f, b) = (s_i f, s_i b)$ \ $\forall i \ge 0$.
\end{enumerate}
These structure maps satisfy the necessary relations to be a simplicial set because of
the conditions that $\tau$ satisfies. 
\end{defn}

\begin{defn} \label{def:twistedlodayTCP} If $R$ is a commutative ring
  and $E(\tau)=C\times_\tau B$ is a TCP as in Definition \ref{def:TCP}
  where $C$ is a commutative simplicial $R$-algebra and the simplicial
  group $G$ acts on $C$ by $R$-algebra isomorphisms (that is, for
  every $q\ge 0$, the group $G_q$ acts on the commutative $R$-algebra
  $C_q$ by $R$-algebra isomorphisms) then we can use the twisting
  $\tau$ to define the \emph{twisted Loday construction with respect to $B$
  of $C$ over $R$, twisted by $\tau$}, 
\[ \L^R_B(C^\tau)_n =\L_{B_n}^R (C_n)= \bigotimes _{b \in B_n} C_n \]
with twisted structure maps given on monomials $\bigotimes_{b\in B_n}
f_b$, with $f_b\in C_n$ for all $b\in B_n$, by 
\begin{align}
     d_0\left(\bigotimes_{b\in B_n} f_b\right)  =\bigotimes_{c\in B_{n-1}}
  g_c  &\text{ with } g_c=\prod_{b:  d_0 b = c}
         \tau(b)( d_0 f_b), \nonumber \\
      d_i\left(\bigotimes_{b\in B_n} f_b\right)  =\bigotimes_{c\in B_{n-1}}
  g_c  & \text{ with }  g_c=\prod_{b:  d_i b = c}  d_i
         f_b \text{ for } \ 1 \le i \le n,   \text{ and
         } \label{deftwist:simpalg}\\ 
     s_i\left(\bigotimes_{b\in B_n} f_b\right)  = \bigotimes_{d\in B_{n+1}} h_d &
\text{ with }  h_d=\prod_{b: s_i b = d}s_i f_b \text{ for }  0 \le i
         \le n.\nonumber  
\end{align}  
\end{defn}
Note that there are two sets of simplicial structure maps being used,
those of $C$ inside and those of $B$ outside.  This looks like the
diagonal of a bisimplicial set, but since our twisting $\tau \colon B_{q+1}
\to G_q$ explains only how to twist elements in $C_q$, this is not the
case  unless the structure group $G$ is a discrete group, viewed as a
constant simplicial group.  
\smallskip

If the structure group $G$ is discrete, there is overlap between
Definition \ref{def:twistedloday} and 
Definition \ref{def:twistedlodayTCP}.  The simplicial commutative
$R$-algebra case  
of Definition \ref{def:twistedloday} actually gives a bisimplicial
set: we use only the simplicial structure of $B$ in the definition and
if $A$ also has simplicial structure, that remains untouched.   The
diagonal of that bisimplicial set agrees with the constant simplicial
group case of Definition \ref{def:twistedlodayTCP}. 

\medskip
Given any TCP of simplicial sets $E(\tau)=F\times_\tau B$ as in
Definition \ref{def:TCP} and a map $R\to A$ of commutative rings, we
can construct $\L^R_F(A) \times_\tau B$ which is a TCP of commutative
simplicial algebras $R$-algebras as 
in Definition \ref{def:twistedlodayTCP} 
using the same structure group $G$ and twisting function $\tau\colon B_q\to
G_{q-1}$.  We use the simplicial left action of $G_n$ on $F_n$ which
we denote by $(g, f)\mapsto gf$ to obtain a left action by simplicial
$R$-algebra isomorphisms 
\begin{align}
G_n \times \L^R_{F_n}(A) &  \to  \L^R_{F_n}(A)\nonumber \\
(g, \bigotimes_{f\in F_n} a_f) & \mapsto \bigotimes_{f\in F_n} a_{g^{-1}f}.
\end{align}
Since the original action of $G_n$ on $F_n$ was a left action, this is
a left action. In the original monomial, the $f$th coordinate is
$a_f$.  After $g\in G_n$ acts on it, the $f$th coordinate is
$b_f=a_{g^{-1} f}$.  After $h\in G_n$ acts on the result of the action
of $g$, the $f$th coordinate is $b_{h^{-1} f} =a_{g^{-1} h^{-1} f}$,
which is the same as the result of acting by $hg$ on the 
monomial. 

\begin{prop}\label{LodayconstructionTCP}
If $E(\tau)=F\times_\tau B$ is a TCP and $R\to A$ is a map of
commutative rings, and we use the simplicial set twisting function
$\tau$ to construct a simplicial $R$-algebra twisting function to
obtain a TCP $\L_F^R(A) \times_\tau B$ as above, we get that 
\[\L_{E(\tau)}^R(A)   \cong \L^R_B(\L_F^R(A)^\tau). \]
\end{prop}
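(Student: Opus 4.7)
The plan is a direct level-wise comparison of the two simplicial commutative $R$-algebras, verifying that the obvious identification on $n$-simplices is compatible with all the face and degeneracy operators.

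First I would set up the candidate isomorphism. At simplicial level $n$ we have on the left
\[ \L_{E(\tau)}^R(A)_n = \bigotimes_{(f,b)\in F_n\times B_n} A,\]
while on the right, unpacking Definition \ref{def:twistedlodayTCP} with the fiber replaced by the simplicial commutative $R$-algebra $\L^R_F(A)$, we have
\[ \L^R_B(\L^R_F(A)^\tau)_n = \bigotimes_{b\in B_n}\Bigl(\bigotimes_{f\in F_n} A\Bigr).\]
The evident reindexing $(f,b)\leftrightarrow (b,f)$ gives a natural bijection of pure tensor monomials; since these generate both algebras as $R$-algebras, it induces an $R$-algebra isomorphism on each level. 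Call this map $\Phi_n$.

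Next I would check compatibility with the simplicial structure. The degeneracies $s_i$ and the face maps $d_i$ for $i\geq 1$ do not involve $\tau$: on the left they act through $s_i(f,b)=(s_if,s_ib)$ and $d_i(f,b)=(d_if,d_ib)$, while on the right the outer $s_i$ and $d_i$ (from $B$) are composed with the inner ones (from $\L^R_F(A)$, which themselves come from $F$) applied to every tensor factor. Chasing through the formulas in Definition \ref{def:twistedlodayTCP} shows that at each target coordinate $(f',c)$ both sides produce the same product $\prod_{(f,b)\mapsto(f',c)} a_{(f,b)}$ (with $R$-unit factors on coordinates that are not hit). This is a purely combinatorial check.

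The only case that uses the twisting is $d_0$, and this is the main obstacle. On the left, $d_0(f,b) = (\tau(b)\cdot d_0 f,\, d_0 b)$, so the $(f',c)$-coordinate of $d_0\bigotimes a_{(f,b)}$ is
\[ \prod_{\substack{b:\, d_0b=c \\ f:\, d_0 f = \tau(b)^{-1} f'}} a_{(f,b)}. \]
On the right, the outer $d_0$ gives $g_c=\prod_{b: d_0 b=c}\tau(b)(d_0 f_b)$ with $f_b = \bigotimes_{f\in F_n} a_{(f,b)}$; the inner $d_0$ from $\L^R_F(A)$ contributes $(d_0 f_b)_{f''}=\prod_{f: d_0 f=f''} a_{(f,b)}$, and the $G_{n-1}$-action given in the paragraph preceding the proposition permutes the $F_{n-1}$-indexing by $f''\mapsto \tau(b)^{-1}f''$. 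Substituting, the $(f',c)$-coordinate on the right becomes exactly the same product. Thus $\Phi_{n-1}\circ d_0 = d_0 \circ \Phi_n$.

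Finally, I would note that each $\Phi_n$ is an isomorphism of commutative $R$-algebras, so assembling the $\Phi_n$ yields an isomorphism of simplicial commutative $R$-algebras $\L_{E(\tau)}^R(A) \cong \L^R_B(\L^R_F(A)^\tau)$, completing the proof. The conceptual content is entirely concentrated in the $d_0$ check, where the action of $\tau(b)$ on $\L^R_{F_{n-1}}(A)$ by the index permutation precisely reproduces Moore's twisting rule on $E(\tau)$; once the indexing conventions are set, the remaining verifications are bookkeeping.
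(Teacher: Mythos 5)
Your proof is correct and follows essentially the same route as the paper: identify the $n$-simplices on both sides via the reindexing $(f,b)\leftrightarrow(b,f)$, observe that $s_i$ and $d_i$ for $i\geq 1$ commute with this identification for purely combinatorial reasons, and then verify the $d_0$ case by unwinding both the Moore twisting rule $d_0(f,b)=(\tau(b)\cdot d_0 f,\ d_0 b)$ and the index-permutation action of $G$ on $\L^R_F(A)$. Your reduction of the $(f',c)$-coordinate on both sides to $\prod_{b:\,d_0 b=c}\ \prod_{f:\,d_0 f=\tau(b)^{-1}f'} a_{(f,b)}$ is exactly the computation the paper carries out.
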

This uses the definition of the Loday construction of a simplicial
algebra twisted by a simplicial group in Definition
\ref{def:twistedlodayTCP}.   

Proposition \ref{LodayconstructionTCP} generalizes the well-known fact
that for a product of simplicial sets,
\[\L_{F\times B}^R(A)\cong \L_B^R(\L_F^R(A)). \]
\begin{proof}
Both $\L_{E(\tau)}^R(A)$ and $ \L^R_B(\L_F^R(A)^\tau)$ have the same
set of $n$-simplices for every $n\geq 0$:
\[
\bigotimes_{e\in E(\tau)_n } A = \bigotimes_{(f,b)\in F_n\times B_n  }
A \cong \bigotimes_{b\in B_n}\ \  (\bigotimes_{f\in F_n} \ \ A). \]
We have to show that the simplicial structure maps agree  with
respect to this identification. 

For $1\leq  i\leq n$, for any choice of elements $x_{(f,b)} \in A$,
\[  d_i\left(\bigotimes_{(f,b)\in F_n\times B_n  } x_{(f,b)}\right) =
  \bigotimes_{(g,c)\in F_{n-1}\times B_{n-1}  } y_{(g,c)}\] 
where  
\[y_{(g,c)}=\prod_{(f,b): ( d_i f,  d_i b)=(g,c) }
  x_{(f,b)} =\prod _{b:  d_i b=c} \left(\prod_{f:  d_i
    f = g}  x_{(f,b)}\right). \] 
The internal product on the right-hand side is what we get from
$ d_i$ on $\L_F^R(A)$ and the external product is what we get
from $ d_i$ of $\L_B^R$, so this agrees with the definition in
Equation (\ref{deftwist:simpalg}). 

The proof that the $s_i$, $0\leq i\leq n$ agree is very similar.  

The interesting case is that of $ d_0$.  For any choice of
elements $x_{(f,b)} \in A$, the boundary $ d_0$ associated to
$\L_{E(\tau)}^R(A)$ satisfies 
\begin{equation}
\label{oneway}
 d_0 \left(\bigotimes_{(f,b)\in F_n\times B_n  } x_{(f,b)}\right) =
\bigotimes_{(g,c)\in F_{n-1}\times B_{n-1}  } y_{(g,c)}, 
\end{equation}
where
\[ y_{(g,c)}=\prod_{(f,b):  d_0(f, b)=(g,c) } x_{(f,b)} =\prod
  _{b:  d_0 b=c} \left( \prod_{f: \tau(b) \cdot  d_0 f= g}
  x_{(f,b)}\right). \] 
From the $ \L_B^R(\L_F^R(A))$ point of view, by Equation
\eqref{deftwist:simpalg}. 
\begin{align*}
   d_0\left(\bigotimes_{b\in B_n} (\bigotimes_{f\in F_n} x_{(f,b)})\right) &
 = \bigotimes_{c\in B_{n-1}} \prod_{b:   d_0 b = c}
  \tau(b)  d_0\left(\bigotimes_{f\in F_n} x_{(f,b)}\right) \\ 
  & = \bigotimes_{c\in B_{n-1}}  \prod_{b:   d_0 b = c} \tau(b)
  \left(\bigotimes_{g\in F_{n-1}} \prod_{f:  d_0 f = g} x_{(f,b)}\right)  \\
  & = \bigotimes_{c\in B_{n-1}}  \prod_{b:  d_0 b = c}
  \left(\bigotimes_{g\in F_{n-1}}  \prod_{f:   d_0 f = \tau(b)^{-1} g}
    x_{(f,b)}\right)\\  
& = \bigotimes_{(g,c)\in F_{n-1}\times B_{n-1}} \prod_{b:   d_0
 b=c} \left(\prod_{f:  d_0 f = \tau(b)^{-1} g} x_{(f,b)}\right),  
 \end{align*}
which is exactly what we got in  \eqref{oneway}.
\end{proof}
If $G$ is a discrete group and $E(\tau)$ is constructed using $G$,
then for every $q > 0$ there is a function
$\tau \colon B_q\to G$ satisfying  
the conditions listed in Equation \eqref{tauconst} and $G$ acts
simplicially on $F$ on the left.  
\begin{thm} \label{thm:tcpspsq}
If $E(\tau)=F\times_\tau B$ is a TCP where the twisting is by a
constant simplicial group $G$   and if $R\to A$ is a map of
commutative rings 
so that $\pi_*( \L_F^R(A))$ is flat over $R$, then there is a spectral sequence 
\begin{equation}\label{eq:tcpspsq}
  E^2_{p, q} =\pi_p((\L_{B}^R(\pi_* \L_F^R(A)^\tau))_q) \Rightarrow
  \pi_{p+q} (\L_{E(\tau)}^R(A)).
  \end{equation}
\end{thm}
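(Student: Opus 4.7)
The plan is to combine Proposition~\ref{LodayconstructionTCP} with the standard spectral sequence of a bisimplicial $R$-module, using constancy of $G$ to separate the two simplicial directions and the flatness hypothesis to identify the $E^2$-page by a K\"unneth argument.

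First, by Proposition~\ref{LodayconstructionTCP} I identify $\L^R_{E(\tau)}(A)\cong\L^R_B(C^\tau)$ as simplicial commutative $R$-algebras, where I abbreviate $C:=\L^R_F(A)$. Since $G$ is constant, each $\tau(b)\in G$ acts on the entire simplicial $R$-algebra $C$ by simplicial $R$-algebra automorphisms, so the twisted construction of Definition~\ref{def:twistedlodayTCP} admits a bisimplicial refinement
\[
X_{p,q}:=\bigotimes_{b\in B_p}C_q,
\]
with horizontal operators in $p$ given by the twisted Loday formulae of Definition~\ref{def:twistedloday} applied to the $R$-algebra $C_q$ with the inherited $G$-action, and with vertical operators in $q$ obtained by applying the simplicial operators of $C$ levelwise to each tensor factor. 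Commutativity of horizontal and vertical operators uses precisely that $\tau(b)$ commutes with every simplicial operator of $C$, which is where constancy of $G$ enters essentially; for a non-constant $G$ one would have $\tau(b)\in G_{q-1}$ and no uniform action on $C_q$. By the verification of face/degeneracy formulae (analogous to the one in Proposition~\ref{LodayconstructionTCP}), the diagonal of $X_{\bullet,\bullet}$ recovers $\L^R_B(C^\tau)$.

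Next, I apply the standard spectral sequence of a bisimplicial $R$-module,
\[
E^2_{p,q}=\pi^h_p\pi^v_q(X_{\bullet,\bullet})\Longrightarrow\pi_{p+q}\bigl(\mathrm{diag}(X_{\bullet,\bullet})\bigr),
\]
which converges since $B$ is finite. For the $E^1$-identification, fix $p$; then $X_{p,\bullet}=\bigotimes_{b\in B_p}C_\bullet$ is the $|B_p|$-fold $R$-tensor product of the simplicial $R$-module $C$, and the twisting $\tau$ only affects horizontal operators. Flatness of $\pi_*C=\pi_*\L^R_F(A)$ over $R$ then yields an iterated K\"unneth isomorphism
\[
\pi^v_*(X_{p,\bullet})\cong\bigotimes_{b\in B_p}\pi_*C=\L^R_{B_p}(\pi_*C),
\]
graded by the internal degree inherited from $\pi_*C$. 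The horizontal operators on $X_{\bullet,\bullet}$ are $R$-algebra maps and the K\"unneth map is multiplicative, so after passing to $\pi^v_*$ they become the twisted Loday operators of Definition~\ref{def:twistedloday} applied to the graded-commutative $R$-algebra $\pi_*C$ with its induced $G$-action, in the sense of Remark~\ref{originalLoday}. Assembled over $p$ this realizes $\L^R_B(\pi_*C^\tau)$ as a simplicial graded-commutative $R$-algebra, and taking $\pi^h_p$ of its internal-degree-$q$ part produces
\[
E^2_{p,q}=\pi_p\bigl((\L^R_B(\pi_*\L^R_F(A)^\tau))_q\bigr).
\]

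The main technical obstacle is checking naturality of the K\"unneth isomorphism with respect to the horizontal twisted boundary, i.e.\ that the identification of $\pi^v_*(X_{p,\bullet})$ with $\L^R_{B_p}(\pi_*C)$ carries the horizontal operators induced from $X_{\bullet,\bullet}$ to the twisted Loday operators of Definition~\ref{def:twistedloday} on $\pi_*C$. Under flatness this is standard, but the sign conventions forced by graded-commutativity of $\pi_*C$ (as flagged in Remark~\ref{originalLoday}) must be tracked carefully; the twisting itself causes no additional difficulty because $\tau(b)$ acts as a simplicial $R$-algebra automorphism on $C$ and so commutes with all K\"unneth maps.
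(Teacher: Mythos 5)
Your proposal follows essentially the same route as the paper's proof: both pass to the bisimplicial $R$-algebra $X_{p,q}=\bigotimes_{b\in B_p}\bigotimes_{f\in F_q}A$, use constancy of $G$ to make the horizontal (twisted) and vertical operators commute, take the column-filtration spectral sequence of the associated total complex (equivalently, of the diagonal), and invoke flatness of $\pi_*\L^R_F(A)$ for the K\"unneth identification of the $E^1$-page, then recognize the induced horizontal differential as the twisted Loday boundary on $\pi_*\L^R_F(A)$. The only cosmetic difference is that you phrase the starting identification via Proposition~\ref{LodayconstructionTCP} and explicitly flag the sign conventions of Remark~\ref{originalLoday}, whereas the paper builds the bisimplicial object directly and cites \cite[Theorem 2.4 of Section IV.2.2]{gj} for the diagonal/total-complex comparison.
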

Here, $\pi_* \L_F^R(A)$ is a graded commutative $R$-algebra.  For any
fixed $p$ and $q$, we consider the degree $q$ part of
$\L_{B_p}^R(\pi_* \L_F^R(A)^\tau)$, $(\L_{B_p}^R(\pi_*
\L_F^R(A)^\tau))_q$. This forms a simplicial abelian group which in
degree $p$ is  
$(\L_{B_p} ^R(\pi_* \L_F^R(A)))_q$, with simplicial structure maps
induced by those of $B$ with the twisting by $\tau$, and
$\pi_p((\L_{B}^R(\pi_* \L_F^R(A)^\tau))_q)$ denotes its $p$th
homotopy group.  
The flatness assumption above is for instance satisfied if $R$ is a
field. 
\begin{proof}
Since the twisting is by a constant simplicial group $G$, we are able
to form a bisimplicial $R$-algebra 
\begin{equation}
\label{def:bicomp}
(m,n)\mapsto \bigotimes_{b\in B_m}  \bigotimes_{f\in F_n} A.
\end{equation}
In the $n$-direction, the simplicial structure maps $ d_i^F$ and
$s_i^F$ will simply be the simplicial structure maps of the Loday
construction $\L^R_F(A)$ applied simultaneously to all the copies of
$\L^R_F(A)$ over all the $b\in B_n$.  In the $m$ direction,
$ d_i^B$ and $s_i^B$ are the simplicial structure maps of the
twisted Loday construction, as in Equation (\ref{tauconst}) in
Definition  \ref{def:twistedloday}.  These commute exactly because the
simplicial structure maps in $G$ are all equal to the identity. For any choice
of $x_b\in \L^R_F(A)_n$ for all $b\in B_m$,  
\[  d^B_0 d^F_i \left(\bigotimes_{b\in B_m} x_b\right) =
 d^B_0 \left(\bigotimes_{b\in B_m}  d^F_i ( x_b)\right)=
\bigotimes_{c\in B_{m-1}}  \prod_{b:  d_0b=c} \tau(b)  d_i^F(x_b)
\]
while
\[  d^F_i   d^B_0(\bigotimes_{b\in B_m} x_b) =
 d^F_i \left(\bigotimes_{c\in B_{m-1}}  \prod_{b:  d_0b=c}
\tau(b)\cdot x_b\right) 
= \bigotimes_{c\in B_{m-1}} \prod_{b:  d_0b=c}
 d^F_i( \tau(b)\cdot x_b), 
\]
which is the same since 
\[ d^F_i ( \tau(b)\cdot x_b) =  d_i ( \tau(b)) \cdot
 d^F_i  (x_b) = \tau(b) \cdot   d^F_i  (x_b). \]
Note that since the twisting is by a constant simplicial group,
$\L_{E(\tau)}^R(A)   \cong \L^R_B(\L_F^R(A)^\tau)$ is exactly the
diagonal of the bisimplicial $R$-algebra in Equation
(\ref{def:bicomp}).

\smallskip
We use the standard result (see for instance
\cite[Theorem 2.4 of Section IV.2.2]{gj})  that the total
complex of a bisimplicial abelian group with the alternating sums of
the vertical and the horizontal face maps is chain homotopy equivalent
to the usual chain complex associated to the diagonal of that
bisimplicial abelian group.  Since we know that the realization of the
diagonal is homeomorphic to the double realization of the bisimplicial
abelian group, in order to know the homotopy groups of the double
realization of a bisimplicial abelian group, we can calculate the
homology of its total complex with respect to the alternating sums of
the vertical and the horizontal face maps.  Filtering by
columns gives an $E^2$ spectral sequence calculating the homology of
the total complex associated to a bisimplicial abelian group
consisting of what we get by first taking vertical homology and then
taking horizontal homology.  In the case of the bisimplicial abelian
group we have in Equation (\ref{def:bicomp}), the vertical $q$th homology of
the columns will be the $q$th homology with respect to $\sum_{i=0}^n (-1)^i
 d^F_i$ of the complex  
 \[ \bigotimes_{b\in B_m} \L_F^R(A) \]
 and this is isomorphic to $\pi_q\left(\bigotimes_{b\in B_m} \L_F^R(A)\right)$. 
Since we assumed that $\pi_*( \L_F^R(A))$ is flat over $R$, we obtain  
\[ \pi_q\left(\bigotimes_{b\in B_m} \L_F^R(A)\right) \cong
  (\bigotimes_{b\in B_m} \pi_*( \L_F^R(A)))_q. \]
  Here, the subscript $q$ denotes the degree $q$ part of the graded
  abelian group $\bigotimes_{b\in B_m} \pi_*( \L_F^R(A))$.  
  
Moreover, the effect of the horizontal boundary map on
$\bigotimes_{b\in B_m} \pi_*( \L_F^R(A))$ is the boundary of the twisted Loday
construction, with the action of
$G$ on the graded-commutative $R$-algebra $\pi_*( \L_F^R(A))$  induced
by  that  of $G$ on the commutative simplicial $R$-algebra
$\L_F^R(A)$. As the boundary map preserves internal degree, we get the
desired spectral sequence. 
\end{proof}

\subsection{Norms and finite coverings of $S^1$}
The connected $n$-fold cover of $S^1$ given by the degree $n$ map can
be made into a TCP as follows. Let $B = 
S^1$ be the standard simplicial circle and $C_n = \langle \gamma
\rangle$ be the cyclic group of order $n$ with 
generator $\gamma$. The twisting function $\tau \colon S^1_q \to C_n$ sends
the non-degenerate simplex in 
$S^1_1$ to $\gamma$ and is then determined by Equation
\eqref{tauconst}. Let $F = 
C_n$, viewed as a constant simplicial set, and let $C_n$ act on $F$
from the left. Then $E(\tau) = F 
\times_{\tau} B$ is in fact another 
simplicial model of $S^1$ with $n$ non-degenerate
$1$-simplices. Therefore,
\[ \L^R_{E(\tau)}(A) \simeq \L^R_{S^1}(A) \quad \text{ and } \quad 
  \pi_*(\L^R_{E(\tau)}(A)) \cong \HH_*^R(A)\]
for every commutative $R$-algebra $A$. In this case,
$\mathcal{L}^R_F A = A^{\otimes_R n}$ is the constant commutative
simplicial $R$-algebra, with the $C_n$-action given by 
\begin{equation*}
\gamma(a_1 \otimes \cdots \otimes a_n)= a_n \otimes a_1 \otimes \cdots
\otimes a_{n-1}. 
\end{equation*}
As $\mathcal{L}^R_F(A)$ is a constant simplicial object, we obtain
that
\[ \pi_*\L_F^R(A) \cong \begin{cases}A^{\otimes_R n}, & *=0, \\
    0, & * > 0.\end{cases}\]
If $A$ is flat over $R$, the spectral sequence of Equation
\eqref{eq:tcpspsq} is  
\begin{equation*} 
  E^2_{p,q} = \pi_p(\mathcal{L}^R_{S^1}(A^{\otimes_R n})^\tau)_q
  \Rightarrow \pi_{p+q} \mathcal{L}^R_{E(\tau)}(A) \cong \HH_{p+q}^R(A). 
\end{equation*}
But here, the spectral sequence is concentrated in $q$-degree zero,
and hence it collapses, yielding
\[ \pi_p(\mathcal{L}^R_{S^1}(A^{\otimes_R n})^\tau) \cong \HH^R_{p}(A).\]

With Proposition \ref{LodayconstructionTCP} we can identify
$\L_{E(\tau)}^S(A)$ if $A$ is a commutative ring spectrum and we
recover the known result (see for instance \cite[p.~2150]{abghl}) that 
\begin{equation} \label{eq:thhnorm}
\THH_{C_n}(N_e^{C_n} A) \simeq \THH(A).
\end{equation}
Here, $\THH_{C_n}(A) = N_{C_n}^{S^1}(A)$ is the $C_n$-relative $\THH$ defined in
\cite[Definition 8.2]{abghl},  where $N_e^{C_n}A$ is 
the Hill-Hopkins-Ravanel norm. See also \cite[Definition
2.0.1]{aghkk}. The identification in \eqref{eq:thhnorm} is an instance
of 
the transitivity of the norm: 
$N_{C_n}^{S^1}N_e^{C_n}A \simeq  N_e^{S^1}A$.

\subsection{The case of the Klein bottle}
For the Klein bottle we compute the homotopy groups of the Loday
construction of the polynomial algebra $k[x]$ for a field $k$ using
our TCP spectral sequence and we confirm our answer using the following
pushout argument. We assume that the characteristic of $k$ is not $2$, so $2$
is invertible in $k$. 

Note that the Klein bottle can be represented as a homotopy pushout
$K\!\ell \simeq (S^1 \vee
S^1) \cup^h_{S^1} D^2$.  Since the Loday construction converts
homotopy pushouts of 
simplicial sets into homotopy pushouts of commutative algebra spectra,
we obtain
\[\L^{k}_{K\!\ell}(k[x]) \simeq \L^k_{S^1 \vee
  S^1}(k[x]) \wedge^L_{\L^k_{S^1}(k[x])} \L^k_{D^2}(k[x]). 
\]
Homotopy invariance of the Loday construction yields that
$\pi_*\L^k_{D^2}(k[x]) \cong k[x]$, and as
$\pi_*\L^k_{S^1}(k[x]) = \HH_*^k(k[x])$ is well known to be isomorphic to
$k[x] \otimes \Lambda(\varepsilon x)$ as a graded commutative
$k$-algebra, we get that  
\[ \pi_*\L^k_{S^1 \vee S^1}(k[x]) \cong \pi_*\L^k_{S^1}(k[x]) \otimes_{k[x]}
                                   \pi_*\L^k_{S^1}(k[x]) \cong k[x] \otimes
                                   \Lambda(\varepsilon x_a, \varepsilon x_b) \]
where the indices $a$ and $ b$ allow us to distinguish between the
generators emerging 
from each of the circles $S^1_a \vee S^1_b$.  Let  $S^1_c$
represent the circle along which we glue the disk, and call the corresponding generator in dimension one for the Loday construction over it  
$\varepsilon x_c$. Let $ S^1_a$
denote the circle that $S^1_c$ will go twice around in the same
direction and $S^1_b$ denote the circle that it will go around in
opposite directions.  So we have a projection $K\!\ell\to S^1_b$. 

We can calculate $\pi_*\L^k_{K\!\ell}(k[x])$ with a Tor spectral sequence
whose  $E^2$-page is  

\begin{align}
    E^2_{*,*} & = \Tor^{\pi_* \L^k_{S^1}(k[x])}_{*,*}\bigg(\pi_* \L^k_{S^1 \vee
                S^1} (k[x]), \pi_* \L^k_{D^2} (k[x])\bigg)\label{spseqKl} \\ 
    &\cong \Tor^{k[x] \otimes \Lambda(\varepsilon x_c)}_{*,*}(k[x] \otimes
      \Lambda(\varepsilon x_a, \varepsilon x_b), k[x]). \nonumber
\end{align}

We need to understand the $\pi_* \L^k_{S^1}(k[x])$-module structure on
$k[x] \otimes \Lambda(\varepsilon
x_a, \varepsilon x_b)$, so we need to understand  the map
\[k[x] \otimes \Lambda(\varepsilon x_c) \to k[x] \otimes \Lambda(\varepsilon
x_a, \varepsilon x_b). \]
Since $k[x]$ in both cases is the image of the Loday construction on a
point, we know that $x$ on the left maps to $x$ on the right. 
If we map  $S^1_c$ to $S^1_a \vee S^1_b$ and then collapse $S^1_a$ to a
point, we end up with a map 
$S^1_c\to S^1_b$ that is contractible, so if we only look at the
$\varepsilon x_b$ part of the image of $\varepsilon x_c$ in
$\Lambda(\varepsilon 
x_a, \varepsilon x_b)$ (that is, if we augment $\varepsilon x_a$ to
zero) we get zero. 

We deduce that 
\begin{align*}
\Tor^{k[x] \otimes \Lambda(\varepsilon x_c)}_{*,*} &(k[x] \otimes
      \Lambda(\varepsilon x_a, \varepsilon x_b), k[x])  \\
      & \cong
      \Tor^{k[x]}_{*,*} (k[x], k[x])\otimes 
      \Tor^{ \Lambda(\varepsilon x_c)}_{*,*}(
      \Lambda(\varepsilon x_a), k)  \otimes \Tor^k_{*,*}
        (\Lambda(\varepsilon x_b), k) \\ 
  &\cong k[x]\otimes 
      \Tor^{ \Lambda(\varepsilon x_c)}_{*,*}(
      \Lambda(\varepsilon x_a), k) \otimes      \Lambda(\varepsilon x_b). 
      \end{align*}    
 In order to calculate $\Tor^{ \Lambda(\varepsilon x_c)}_{*,*}(\Lambda(\varepsilon
 x_a), k)$,  we map  $S^1_c$ to $S^1_a \vee 
      S^1_b$ and then collapse $S^1_b$ to a point.  This gives a map
      $S^1_c\to S^1_a$ that is homotopic to  the double cover of the
      circle as depicted below. 
We consider elements of $\L_{S^1_c}(k[x])$, which we think of as built
on the top circle, and of $\L_{S^1_a}(k[x])$, which we think of as
built on the bottom circle, and write them as sums of tensor monomials
of ring elements with subscripts indicating the simplex each ring
element lies over.  Under this map, we have 
\begin{minipage}{.25\textwidth}
\setlength{\unitlength}{1cm}
\thicklines
\begin{picture}(5,8)
\put(2,2){\circle{2}}
\put(2,3.9){\vector(0,-1){1}}
\put(2,5){\circle{2}}
\put(2,5.7){\circle*{0.1}}
\put(1.9,5.8){$v_1$}
\put(.7,5){$\alpha_1$}
\put(2,4.3){\circle*{0.1}}
\put(2,4){$v_0$}
\put(3,5){$\alpha_0$}
\put(2,1.3){\circle*{0.1}}
\put(2,1){$v$}
\put(2.9,2){$\alpha$}
\put(2.1,1.31){\vector(-1,0){.1}}
\put(1.3,4.9){\vector(0,-1){.1}}
\put(2.7,5){\vector(0,1){.1}}
\end{picture}
 
\end{minipage}
\begin{minipage}{.75\textwidth}
\begin{align*}
    x_{\alpha_0} := 1_{s_0v_0} \otimes 1_{s_0v_1} \otimes x_{\alpha_0}
  \otimes 1_{\alpha_1} \mapsto 1_{s_0v} \otimes x_{\alpha} \\ 
    x_{\alpha_1} := 1_{s_0v_0} \otimes 1_{s_0v_1} \otimes 1_{\alpha_0}
  \otimes x_{\alpha_1} \mapsto 1_{s_0v} \otimes x_{\alpha} 
\end{align*}
Then $d_ 0 - d_1$ maps these elements to the following:
\begin{align*}
  x_{\alpha_0}
  \mapsto 1_{v_0} \otimes x_{v_1} - x_{v_0}
  \otimes 1_{v_1}\\ 
  x_{\alpha_1}
  \mapsto x_{v_0} \otimes 1_{v_1} - 1_{v_0}
  \otimes x_{v_1} 
\end{align*}
Note that the sum of the images under $d_0-d_1$ is zero, and so $
x_{\alpha_0}+ x_{\alpha_1}$ is a cycle with one copy of $x$ 
in simplicial degree 1, which is what $\varepsilon x_c$ should be.
Monomials that put the copy of $x$ over $s_0v_i$ are the image under
$d_0-d_1+d_2$ of monomials that put one copy of $x$ over $s_0^2 v_i$,
so do not contribute to the homology, and all cycles not involving
those and involving only one copy of $x$ are multiples of  $
x_{\alpha_0}+ x_{\alpha_1}$, and so $ x_{\alpha_0}+ x_{\alpha_1}$
represents $\varepsilon x_c$.  But we know that $\varepsilon x_a$ is
represented by $1_{s_0v} \otimes x_{\alpha}$, 
so we get that that $\varepsilon x_c \mapsto 2 \varepsilon x_a$. 
\end{minipage}
\medskip
We take the standard resolution of  $k$ as a
$\Lambda(\varepsilon x_c)$-module:

\[ \xymatrix{\ldots \ar[rr]^{\cdot \varepsilon x_c} & &
    \Sigma\Lambda(\varepsilon x_c) 
  \ar[rr]^{\cdot \varepsilon x_c} & &\Lambda(\varepsilon
 x_c) }\]

Since we saw above that $\varepsilon x_c \mapsto 2\varepsilon
x_a$, tensoring  $(-)\otimes_{\Lambda(\varepsilon
  x_c)} \Lambda(\varepsilon x_a)$ yields
\[ \xymatrix{\ldots \ar[rr]^{\cdot 2\varepsilon x_a} & &
    \Sigma\Lambda(\varepsilon x_a) 
  \ar[rr]^{\cdot 2\varepsilon x_a} & &\Lambda(\varepsilon
 x_a) }\]
Since we assume that $2$ is invertible in $k$, we get that
$\Tor^{\Lambda(\varepsilon x_c)}_*(\Lambda(\varepsilon x_a), k) \cong
k$,  
and so when $2$ is invertible in $k$, the spectral sequence in
Equation (\ref{spseqKl}) has the form
\[
    E^2_{*,*} \cong k[x] \otimes k \otimes \Lambda(\varepsilon x_b)
                \cong k[x] \otimes \Lambda(\varepsilon x_b),  
\]
and therefore also collapses for degree reasons, yielding 
\[
\pi_*\L^k_{K\!\ell}(k[x]) \cong k[x] \otimes \Lambda(\varepsilon x).
\]
\begin{rem}
In fact we have shown that $\pi_*\L^k_{K\!\ell}(k[x]) \cong \pi_*
\L^k_{S^1}(k[x])$ and that the projection $K\!\ell \to S^1_b$ induces this
isomorphism. 
\end{rem}

Now we want to get the same result using our TCP spectral
sequence for $A=k[x]$.  We will use the following simplicial model for
the Klein bottle: 
\[K\!\ell= (I \times S^1)/(0,t) \sim (1, \text{flip}(t)) \]
where flip is the
reflection of the circle about the $y$-axis.  If we use the same model
of the circle with two vertices and two edges that we used in the
double cover picture above but we reverse the orientation on
$\alpha_0$ so that both edges go top to bottom, this is a simplicial
map preserving $v_0$ and $v_1$ and exchanging the $\alpha_i$. 

The flip map induces a map on $\pi_*(\L_{S^1}^k(k[x]) \cong  k[x]
\otimes \Lambda(\varepsilon x)$ sending $x\mapsto x$ and $\varepsilon
x\mapsto- \varepsilon x$.  The fact that $x\mapsto x$ comes from the
fact that it is the image of the Loday construction over a point.
Using the same notation and argument as before, with the different
orientation on $\alpha_0$, $\varepsilon x$ can be represented by
$x_{\alpha_0}-x_{\alpha_1}$, so exchanging the $\alpha_i$ sends
$\varepsilon x$ to $- \varepsilon x$.  

The nontrivial twist $\tau \colon S^1 \to C_2=\langle\gamma\rangle$ maps
the non-degenerate $1$-cell $\alpha \in S^1_1$ to $\gamma$ and is then
determined by  Equation \eqref{tauconst}, yielding
\begin{equation} \label{eq:gammaaction}
  d_0(a_0 \otimes a_1 \otimes \ldots \otimes a_n) = a_0 \cdot \gamma
  a_1 \otimes a_2 \otimes \ldots \otimes a_n.
\end{equation} 
The TCP spectral sequence \eqref{eq:tcpspsq} in this case takes the form
\[E^2_{p,q} = \pi_p \left(\left( \L^k_{S^1}\bigg(\pi_*
      (\L^k_{S^1}(k[x]))^\tau\bigg)\right)_q\right) 
  \implies \pi_{p+q}\L^k_{K\!\ell}(k[x])\] 
and since $ \pi_* \L^k_{S^1}(k[x]) \cong k[x] \otimes \Lambda
(\varepsilon x)$, 
\[ E^2_{p,q} =\pi_p \left(\left(\L^k_{S^1}\bigg( k[x] \otimes
      \Lambda (\varepsilon x)^\tau\bigg)\right)_q\right), 
\] 
which is the $p$th homotopy group of the simplicial $k$-vector space
whose $p$-simplices are 
\[\left(\L_{S^1_p}^k(k[x] \otimes \Lambda (\varepsilon x)^\tau)\right)_q.\]

For each $p$, $\L_{S^1_p}^k(k[x] \otimes \Lambda (\varepsilon
x))\simeq \L_{S^1_p}^k(k[x])\otimes _k \L_{S^1_p}^k(\Lambda
(\varepsilon x))$, and so $\L_{S^1}^k(k[x] \otimes \Lambda (\varepsilon
x)^\tau)\simeq \L_{S^1}^k(k[x])\otimes _k \L_{S^1}^k(\Lambda
(\varepsilon x)^\tau)$.
We can think of this tensor product of simplicial $k$-algebras as the
diagonal of a bisimplicial abelian group, and again by 
\cite[Theorem 2.4 of Section IV.2.2]{gj}  the total
complex of a bisimplicial abelian group with the alternating sums of
the vertical and the horizontal face maps is chain homotopy equivalent
to the usual chain complex associated to the diagonal of that
bisimplicial abelian group. 
But in this case of a tensor product, the total complex was obtained
by tensoring together two complexes, and since we are working over a
field its homology is the tensor product of the homology of the two
complexes, so 
\begin{align*}
\pi_* \left(\left(\L^k_{S^1}\bigg(k[x] \otimes
  \Lambda (\varepsilon x)^\tau\bigg)\right)_*\right) &\cong 
\pi_* \left(\left( \L^k_{S^1} ( k[x]) \right)_*\right) 
\otimes\pi_* \left(\left(\L^k_{S^1} ( \Lambda
(\varepsilon x)^\tau)\right)_*\right).
\end{align*}
The first factor is just the Hochschild homology of $k[x]$. It sits
in the $0$th row of the $E^2$ term since $x$ has internal degree zero,
and gives us $\pi_*(\L^k_{S^1} (k[x]) \cong k[x] \otimes
\Lambda(\varepsilon x))$ concentrated in positions $(0,0)$ and $(1,0)$.
All spectral sequence differentials vanish on it for degree reasons,
and so it will just contribute $k[x] \otimes \Lambda(\varepsilon x)$
to the $E^\infty$ term. 

The second factor in the $E^2$ term is the twisted Hochschild homology for
$\Lambda(\varepsilon x)$.  To calculate it, we can use the normalized
chain complex and therefore we only have to  
consider non-degenerate elements, which means that we only have two 
elements to take into account in any given simplicial degree: 

\begin{center}
\begin{tabular}{|c|c|c|c|c}
\hline
$p$-degree & $0$ & $1$ & $2$ & \ldots \\
\hline
       & $1$ & $1 \otimes \varepsilon x$ & $1 \otimes \varepsilon x
                                           \otimes \varepsilon x$ & 
                                               $\ldots$ \\ 
  \hline
 &  $\varepsilon x$ & $\varepsilon x \otimes \varepsilon x$ &
     $\varepsilon x \otimes \varepsilon x \otimes \varepsilon x$ &
                                                                   $\ldots$
  \\ \hline 
\end{tabular}
\end{center}

\bigskip

Elements of the form $\varepsilon x \otimes \ldots \otimes \varepsilon
x$ will map to zero under the Hochschild boundary 
map.  We need to consider the odd and even cases of
differentials on elements of the form $1 \otimes \varepsilon x \otimes
\ldots \otimes \varepsilon x$.  The $ d_i$ maps in the
twisted  and untwisted Hochschild complex are all the same except
$ d_0$, which 
incorporates the twisting action of $\tau$. Therefore we have 
\begin{align*}
    d(1 \otimes (\varepsilon x)^{\otimes 2k}) &= -\varepsilon x^{\otimes 2k}
                                             + (-1)^{2k} (-1) \varepsilon
                                             x^{\otimes 2k} = -2
                                             \varepsilon x^{\otimes 2k}\\ 
    d(1 \otimes (\varepsilon x)^{\otimes 2k+1}) &= -\varepsilon x^{\otimes
                                               2k+1} + (-1)^{2k+1} (1)
                                               \varepsilon x^{\otimes
                                               2k+1} = -2 \varepsilon
                                               x^{\otimes 2k+1}.  
\end{align*}
Here, the first $-1$ comes from the 
  $\gamma$ action on $\epsilon x$ as in \eqref{eq:gammaaction} and the
  extra $\pm 1$ in  
  brackets come from  passing the one-dimensional
$\varepsilon x$ past an odd or an even number of copies of itself.
Since 
we are assuming that $2$ is invertible in $k$, we get that the second
part of the $E^2$ term has only $k$ left in degree $0$. 
So, if $2$ is invertible in $k$, then the entire $E^2$ term is just $k[x]
\otimes \Lambda(\varepsilon x)$ in the $0$th row, and the TCP spectral
sequence collapses and confirms that  
\[\pi_*\L^k_{K\!\ell}(k[x]) \cong k[x] \otimes \Lambda(\varepsilon x).\]
\section{Hopf algebras in spectra}
\label{sec:hopf}

We start by describing what we mean by the notion of a commutative
Hopf algebra in the $\infty$-category of spectra, $\Sp$.
We consider the $\infty$-category $\CAlg$ of 
$E_\infty$-ring spectra.

\begin{defn}
A \emph{commutative Hopf algebra spectrum} is a cogroup object in $\CAlg$. 
\end{defn}

Hopf algebra spectra are fairly rare, so let us list some important examples. 
 
\begin{ex}
If $G$ is a topological abelian group, then the spherical group ring
$S[G] = \Sigma^\infty_+ G$ equipped with the product induced by the
product in $G$, the coproduct induced by the diagonal map $G\to G\times G$,
and the antipodal map induced by the inverse map from $G$ to $G$ is a
commutative Hopf algebra 
spectrum. This follows from the fact that the suspension spectrum 
functor $\Sigma^\infty_+ \colon \cS \to \Sp$ is a strong symmetric
monoidal functor. Here $\mathcal{S}$ denotes the $\infty$-category of spaces.
\end{ex}

\begin{ex}
If $A$ is an ordinary commutative Hopf algebra over a commutative ring
$k$ and $A$ is flat as a $k$-module then the
Eilenberg-Mac Lane spectrum $HA$ is a commutative Hopf
algebra spectrum over $Hk$ because the canonical map 
\[
HA \wedge_{Hk} HA \to H(A \otimes_k A)
\]
is an equivalence. 
\end{ex}

We use the fact that the category of commutative ring spectra is
tensored over unpointed topological spaces and simplicial sets in a
compatible way \cite[VII, \S 2, \S 3]{ekmm}. 

If $\mathcal{U}$ denotes the category of unbased (compactly generated
weak Hausdorff) spaces and $X \in \mathcal{U}$, then for every pair of
commutative ring spectra $A$ and $B$ there is a homeomorphism of
mapping spaces (\cite[VII,
Theorem 2.9]{ekmm}) 
\begin{equation} \label{eq:adj1}
  \mathcal{C}_S(X \otimes A, B)  \cong \mathcal{U}(X,
  \mathcal{C}_S(A,B)).\end{equation}
Here, $\mathcal{C}_S$ denotes the (ordinary) category of commutative
ring spectra in the sense of \cite{ekmm}.
By \cite[Corollary 4.4.4.9]{HTT}, \eqref{eq:adj1} corresponds to an  
equivalence of mapping spaces of $\infty$-categories
\begin{equation} \label{eq:adj1a}
 \CAlg(X \otimes A, B)  \simeq
                               \mathcal{S}(X, \CAlg(A,B)).  
\end{equation}
See also \cite[\S 2]{rsv} for a detailed account on tensors in
$\infty$-categories.

If we consider a commutative
Hopf algebra spectrum $\niceH$, then the space of maps
$\mathcal{C}_S(\niceH,B)$ has a basepoint: the composition of the counit
map $\niceH \ra S$ followed by the unit map $S \ra B$ is a map of
commutative ring spectra. The functor that takes an unbased space $X$
to the topological sum of $X$ with a point $+$ is left adjoint
to the forgetful functor the category of pointed spaces,
$\text{Top}_*$,  to spaces, so
we obtain a homeomorphism 
\begin{equation} \label{eq:adj2}
  \mathcal{U}(X, \mathcal{C}_S(\niceH,B)) \cong \text{Top}_*(X_+,
  \mathcal{C}_S(\niceH,B))\end{equation}
and correspondingly, an equivalence in the context of
$\infty$-categories 
\begin{equation} \label{eq:adj2a}
  \mathcal{S}(X, \CAlg(\niceH,B)) \simeq \mathcal{S}_*(X_+,
  \CAlg(\niceH,B)).\end{equation}
For path-connected spaces $Z$, May showed that the free
$E_n$-space on $Z$, $C_n(Z)$, is equivalent to $\Omega^n\Sigma^nZ$
\cite[Theorem 6.1]{gils}.   
Segal extended this result to spaces that are not necessarily
connected. He showed that for well-based spaces $Z$  there
is a model of the free $E_1$-space, $C'_1(Z)$, as follows: The spaces
$C_1(Z)$ and $C'_1(Z)$ are homotopy equivalent, $C_1'(Z)$ is a monoid, its
classifying space $BC'_1(Z)$ is 
equivalent to $\Sigma(Z)$ \cite[Theorem
2]{segal}, and thus, $C'_1(Z) \ra \Omega BC'_1(Z)$ is a group
completion. We can apply this result to $Z=X_+$ because $X_+$
is well-based, thus $BC'_1(X_+) \simeq \Sigma(X_+)$. Note that $\Omega
BC'_1(X_+) \simeq \Omega\Sigma(X_+)$.

Nikolaus gives an overview about group completions in the
context of $\infty$-categories \cite{nikolaus}. He shows that for
every $E_1$-monoid $M$, the map $M \ra \Omega BM$ gives rise to a
localization functor of $\infty$-cateories in the sense of
\cite[Definition 5.2.7.2]{HTT}, such that the local objects are grouplike
$E_1$-spaces. In particular, there is a homotopy equivalence of
mapping spaces \cite[Proposition 5.2.7.4]{HTT}
\[ \text{Map}_{E_1}(\Omega BC'_1(X_+), Y) \simeq
  \text{Map}_{E_1}(C'_1(X_+), Y)\]
if $Y$ is a grouplike $E_1$-space. Here,  $E_1$ denotes the
$\infty$-category of $E_1$-spaces.

If $\niceH$ is a commutative
Hopf-algebra, then the space $\CAlg(\niceH,B)$ is a grouplike
$E_1$-space. Therefore, by using Equations \eqref{eq:adj1a} and
\eqref{eq:adj2a}, we obtain a chain of homotopy equivalences 
\begin{align*} 
  \CAlg(X \otimes \niceH, B) &  \simeq
                               \mathcal{S}(X, \CAlg(\niceH,B)) \\
                             & \simeq \mathcal{S}_*(X_+, \CAlg(\niceH,B)) \\
  & \simeq \text{Map}_{E_1}(C'_1(X_+), \CAlg(\niceH,B)) \\
  & \simeq \text{Map}_{E_1}(\Omega
    BC'_1(X_+), \CAlg(\niceH,B)) \\
  & \simeq \text{Map}_{E_1}(\Omega\Sigma(X_+),
\CAlg(\niceH,B)).  
\end{align*}
If $\Sigma(X_+) \simeq \Sigma(Y_+)$ is an equivalence of pointed spaces, then
$\Omega\Sigma(X_+) \simeq \Omega\Sigma(Y_+)$ as grouplike $E_1$-spaces
and therefore we get a homotopy equivalence 
\[ \CAlg(X \otimes \niceH, B) \simeq \CAlg(Y \otimes \niceH, B).\]

Applying the Yoneda Embedding to the above equivalence yields the
following result: 
\begin{thm} \label{thm:bravenewhopf}
If $\niceH$ is a commutative Hopf algebra spectrum and if $\Sigma(X_+)
\simeq \Sigma(Y_+)$ is an equivalence of pointed spaces, then 
there is an equivalence $X \otimes \niceH \simeq Y \otimes \niceH$ in $\CAlg$. 
\end{thm}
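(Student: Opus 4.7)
The strategy is essentially assembled in the preceding paragraphs; what remains is to package the chain of natural equivalences into a functor-of-points statement and invoke Yoneda.

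The plan is to fix an arbitrary commutative ring spectrum $B \in \CAlg$ and trace through the chain of homotopy equivalences already established:
\begin{align*}
\CAlg(X \otimes \niceH, B) & \simeq \mathcal{S}(X, \CAlg(\niceH, B)) \\
& \simeq \mathcal{S}_*(X_+, \CAlg(\niceH, B)) \\
& \simeq \text{Map}_{E_1}(C'_1(X_+), \CAlg(\niceH, B)) \\
& \simeq \text{Map}_{E_1}(\Omega BC'_1(X_+), \CAlg(\niceH, B)) \\
& \simeq \text{Map}_{E_1}(\Omega\Sigma(X_+), \CAlg(\niceH, B)).
\end{align*}
Each equivalence uses a named fact: the tensor/mapping-space adjunction \eqref{eq:adj1a}; the disjoint-basepoint adjunction \eqref{eq:adj2a}; Segal's free $E_1$-monoid model $C'_1$; Nikolaus's group-completion localization, which applies because the commutative Hopf algebra structure on $\niceH$ makes $\CAlg(\niceH, B)$ a grouplike $E_1$-space (the antipode supplies homotopy inverses); and Segal's identification $BC'_1(X_+) \simeq \Sigma(X_+)$.

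Next I would observe that the assumed equivalence $\Sigma(X_+) \simeq \Sigma(Y_+)$ of pointed spaces induces an equivalence $\Omega\Sigma(X_+) \simeq \Omega\Sigma(Y_+)$ of grouplike $E_1$-spaces, since $\Omega$ of a pointed equivalence is an $E_1$-map. Precomposing with this equivalence gives a homotopy equivalence
\[
\text{Map}_{E_1}(\Omega\Sigma(X_+), \CAlg(\niceH, B)) \simeq \text{Map}_{E_1}(\Omega\Sigma(Y_+), \CAlg(\niceH, B)),
\]
and the analogous chain for $Y$ then yields a homotopy equivalence $\CAlg(X \otimes \niceH, B) \simeq \CAlg(Y \otimes \niceH, B)$, natural in $B$.

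The final step is to apply the Yoneda lemma in $\CAlg$: since the representable functors $\CAlg(X \otimes \niceH, -)$ and $\CAlg(Y \otimes \niceH, -)$ are naturally equivalent on all of $\CAlg$, the objects $X \otimes \niceH$ and $Y \otimes \niceH$ must themselves be equivalent in $\CAlg$. The only step that requires real care is verifying naturality in $B$ of each equivalence in the displayed chain, so that the composite really is a natural transformation of functors $\CAlg \to \mathcal{S}$; this is the main obstacle, but each individual equivalence is either explicitly natural (the adjunctions) or comes from an $\infty$-categorical localization, which is natural by construction.
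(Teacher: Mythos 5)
Your proof is correct and follows the paper's route exactly: the same chain of equivalences (tensor/mapping adjunction, disjoint-basepoint adjunction, Segal's $C'_1$-model, Nikolaus's group-completion localization applied to the grouplike $E_1$-space $\CAlg(\niceH,B)$, and $BC'_1(X_+)\simeq\Sigma(X_+)$), followed by the Yoneda embedding. Your additional remarks---that naturality in $B$ must be checked, and that the antipode is what makes $\CAlg(\niceH,B)$ grouplike---are accurate details the paper leaves implicit.
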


\begin{rem}
If $X$ is a pointed simplicial set, then the suspension $\Sigma(X_+)$
is equivalent to $\Sigma(X) \vee S^1$. Therefore, if $X$ and $Y$ are
pointed simplicial sets, such that $\Sigma(X) \simeq \Sigma(Y)$ as
pointed simplicial sets, then
we also obtain an equivalence between $\Sigma(X_+)$ and $\Sigma(Y_+)$. 
\end{rem}

Segal's result also works for larger $n$ than $1$. 
If two spaces are equivalent after an $n$-fold suspension, then an
$E_n$-coalgebra structure on a Hopf algebra is needed for the Loday
construction to be equivalent on these two spaces. There are indeed
interesting spaces that are not equivalent after just one suspension,
but that need iterated suspensions to become equivalent: 
\begin{itemize}
  \item
Christoph Schaper \cite[Theorem 3]{Schaper} shows that for affine 
arrangements $\mathcal{A}$ 
one needs at least a $(\tau_\mathcal{A}+2)$-fold  
suspension in order to get a homotopy type that only
depends on the poset structure of the arrangement. Here,
$\tau_\mathcal{A}$ is a number that depends on the poset data of the
arrangement, namely the intersection poset and the dimension function. 
\item
For homology spheres, the double suspension theorem of 
James W. Cannon and Robert D. Edwards \cite[Theorem in \S 11]{cannon}
states that the double suspension $\Sigma^2 M$ of any $n$-dimensional
homology sphere $M$ is homeomorphic to $S^{n+2}$. Here, a single
suspension does \emph{not} suffice unless $M$ is an actual sphere. 
\end{itemize}

\section{Truncated polynomial algebras}
\label{sec:trunc}
One way of showing that a commutative $R$-algebra spectrum $A$ is not
multiplicatively or linearly stable is to prove that the homotopy
groups of the Loday 
construction $\L_{T^n}^R(A)$ differ from those of $\L_{\bigvee_{k=1}^n
  \bigvee_{\binom{n}{k}} S^k}^R(A)$, as in \cite{DT}. Here, we write
$\bigvee_{\binom{n}{k}} S^k$ for the $\binom{n}{k}$-fold $\vee$-sum of
$S^k$. 
Indeed, there is 
a homotopy equivalence 
\[ \Sigma(T^n) \simeq \Sigma(\bigvee_{k=1}^n\bigvee_{\binom{n}{k}}
  S^k).\]
If $A$ is augmented over $R$, then for proving that $R \ra A$ is not
multiplicatively or additively stable, it suffices to show that
\[ \L_{T^n}^R(A; R) \not\simeq \L_{\bigvee_{k=1}^n
  \bigvee_{\binom{n}{k}} S^k}^R(A; R). \] 
See \cite[\S 2]{lr} for details and background on different notions of
stability. 

In the following we restrict our attention to Eilenberg-Mac Lane
spectra of commutative rings and we will use this strategy to show
that none of the commutative  $\Q$-algebras $\Q[t]/t^m$ for $m \geq 2$
can be multiplicatively stable. We later generalize this to quotients
of the form $\Q[t]/q(t)$ where $q(t)$ is a polynomial without constant
term and to integral and mod-$p$ results.

Pirashvili determined higher order Hochschild homology of truncated 
polynomial algebras of the form $k[x]/x^{r+1}$ additively when $k$ is a field of
characteristic zero \cite[Section 5.4]{pira-hodge} in the case of odd
spheres. A direct adaptation of the methods of \cite[Theorem 8.8]{blprz}
together with the flowchart from \cite[Proposition 2.1]{bhlprz}
yields the higher order Hochschild homology with reduced coefficients
for all spheres. See also \cite[Lemma 3.4]{DT}. 

\begin{prop} \label{prop:spherestruncated}
For all $m \geq 2$ and $n \geq 1$ 
\[ \HH_*^{[n], \Q}(\Q[t]/t^m; \Q) \cong \begin{cases}
\Lambda_{\Q}(x_n) \otimes \Q[y_{n+1}], & \text{ if } n \text{ is odd},
\\
\Q[x_n] \otimes \Lambda_{\Q}(y_{n+1}), & \text{ if } n \text{ is even.}
  \end{cases} \]
\end{prop}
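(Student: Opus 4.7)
The plan is induction on $n$, following the strategy of \cite[Theorem 8.8]{blprz} via the flowchart of \cite[Proposition 2.1]{bhlprz}. For the base case $n=1$ I would compute $\HH_*^{\Q}(\Q[t]/t^m;\Q)$ directly from the standard two-term periodic resolution of $A=\Q[t]/t^m$ as an $A\otimes_{\Q}A$-module, with alternating differentials $t\otimes 1 - 1\otimes t$ and $\sum_{i+j=m-1} t^i\otimes t^j$. For $m\ge 2$ both differentials vanish after reducing along the augmentation $A\to\Q$, leaving a copy of $\Q$ in each homological degree, which assembles with its multiplicative structure into $\Lambda_{\Q}(x_1)\otimes\Q[y_2]$ with $|x_1|=1$ and $|y_2|=2$.

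For the inductive step, I would use the homotopy pushout $S^{n+1}\simeq D^{n+1}\cup^{h}_{S^n}D^{n+1}$. Since the Loday construction sends homotopy pushouts of pointed simplicial sets to homotopy pushouts of commutative $H\Q$-algebras, and $\L^{H\Q}_{D^{n+1}}(H\Q[t]/t^m;H\Q)\simeq H\Q$, we obtain
\[ \L^{H\Q}_{S^{n+1}}(H\Q[t]/t^m;H\Q)\simeq H\Q\wedge^{L}_{\L^{H\Q}_{S^n}(H\Q[t]/t^m;H\Q)}H\Q, \]
which gives rise to a Tor spectral sequence
\[ E^{2}_{p,q}=\Tor_{p,q}^{\HH_{*}^{[n],\Q}(\Q[t]/t^m;\Q)}(\Q,\Q)\Rightarrow \HH_{p+q}^{[n+1],\Q}(\Q[t]/t^m;\Q). \]
By the inductive hypothesis and K\"unneth over the field $\Q$, for $n$ odd this $\Tor$ factors as $\Tor^{\Lambda_{\Q}(x_n)}(\Q,\Q)\otimes\Tor^{\Q[y_{n+1}]}(\Q,\Q)$. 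The first factor is computed via the $2$-periodic resolution by multiplication by $x_n$, yielding $\Q[\bar x_n]$ with $|\bar x_n|=n+1$; the second, via the length-two Koszul resolution, yields $\Lambda_{\Q}(\bar y_{n+1})$ with $|\bar y_{n+1}|=n+2$. This recovers the claimed formula for $n+1$ (which is even), and the even-to-odd step is entirely symmetric.

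The remaining task is to verify collapse and absence of multiplicative extensions. The key bidegree calculation is as follows: a hypothetical nonzero differential $d_r$ with $r\ge 2$ from a class $\bar x_n^{a}\bar y_{n+1}^{b}$ to $\bar x_n^{a'}\bar y_{n+1}^{b'}$ forces $b'=b+r(n+1)-1$, which has no solution for $b,b'\in\{0,1\}$, $n\ge 1$, and $r\ge 2$. Hence the spectral sequence collapses at $E^2$. Because we work over $\Q$ and the generators $\bar x_n,\bar y_{n+1}$ live in consecutive internal degrees $n+1$ and $n+2$, there is also no room for nontrivial multiplicative extensions, so $E^{\infty}$ equals the abutment as graded $\Q$-algebras.

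The main obstacle is really only this bidegree bookkeeping; all other ingredients (the bar/Tor spectral sequence, the standard Koszul and periodic resolutions, and the vanishing of extensions over a field of characteristic zero) are routine. The same proof strategy would fail in the integral or mod-$p$ settings, where divided-power structures appear and where nontrivial extensions are possible, which is presumably why the authors of the paper treat those cases by separate methods.
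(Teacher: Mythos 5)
Your argument is correct and follows the same inductive Tor--spectral-sequence strategy that the paper invokes by citation (the methods of \cite{blprz} and the flowchart of \cite[Proposition 2.1]{bhlprz}); the paper gives no proof of its own for this proposition. One small imprecision: the bidegree constraint $b'=b+r(n+1)-1$ with $b,b'\in\{0,1\}$ rules out differentials only in the odd-to-even step, where $\bar y_{n+1}$ is the exterior generator. In the even-to-odd step $b,b'$ are unconstrained (there $\bar y_{n+1}$ is polynomial), but the complementary relation $a'=a-r(n+2)+1$ is then forced, and for $r\ge 2$, $n\ge 1$ this gives $a'<0$, so the conclusion still holds; the two steps are symmetric only in the sense that the exterior-generator bound applies to whichever of $a$ or $b$ is constrained. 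A quicker route to collapse in both parities is to note that $\bar x_n$ and $\bar y_{n+1}$ both sit in homological filtration $p=1$, so every $d_r$ with $r\geq 2$ on them lands outside the first quadrant; since they generate $E^2$ multiplicatively, the spectral sequence degenerates. Likewise, the ``consecutive internal degrees'' rationale for the absence of multiplicative extensions is not quite the right reason -- what one actually uses is that the abutment is a graded-commutative $\Q$-algebra, so the square of the odd-degree generator automatically vanishes, and a Poincar\'e-series comparison shows the surjection from the free graded-commutative algebra on the two lifts is an isomorphism.
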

In both cases Hochschild homology of order $n$ is a free graded
commutative $\Q$-algebra on two generators in degrees $n$ and
$n+1$, respectively, and the result does not depend on $m$.

We will determine for which $m$ and $n$ we get a decomposition of the form 
\begin{equation} \label{eq:decomposestori}
\pi_*\L_{T^n}^\Q(\Q[t]/t^m; \Q) \cong
\pi_*\L_{\bigvee_{k=1}^n\bigvee_{\binom{n}{k}} S^k}^\Q(\Q[t]/t^m;
\Q). 
\end{equation}
Note that the right-hand side is isomorphic to 
\[\bigotimes_{k=1}^n
\bigotimes_{\binom{n}{k}} \pi_*\L_{S^k}^\Q(\Q[t]/t^m; \Q)\]
where all unadorned tensor products are formed over $\Q$. Thus, if we
have a decomposition as in \eqref{eq:decomposestori}, then we can
read off the homotopy groups of $\pi_*\L_{T^n}^\Q(\Q[t]/t^m; \Q)$ with
the help of Proposition \ref{prop:spherestruncated}. 

Expressing $\Q[t]/t^m$ as the pushout of the diagram 
\[ \xymatrix{\Q[t]\ar[rr]^{t \mapsto t^m} \ar[d]_{t \mapsto 0}& & \Q[t] \\
\Q  } \]
allows us to express the Loday construction for $\Q[t]/t^m$, now
viewed as a commutative $H\Q$-algebra spectrum,  as the
homotopy pushout of the diagram
\[\xymatrix{\L^{H\Q}_{T^n}(H\Q[t]; H\Q) \ar[rr]^{t \mapsto t^m} \ar[d]_{t
      \mapsto 0}& & \L^{H\Q}_{T^n}(H\Q[t]; H\Q)  \\
H\Q} \]
and so
\[ \L_{T^n}^{H\Q}(H\Q[t]/t^m; H\Q) \simeq\L^{H\Q}_{T^n}(H\Q[t]; H\Q)
  \wedge^L_{\L^{H\Q}_{T^n}(H\Q[t]; H\Q)} H\Q. \] 
As $\Q[t]$ is smooth over $\Q$, $\L^{H\Q}_{T^n}(H\Q[t]; H\Q)$ is stable \cite[Example
2.6]{DT}.  So we can write 
\[ \L^{H\Q}_{T^n}(H\Q[t]; H\Q) \simeq
  \L^{H\Q}_{\bigvee_{k=1}^n\bigvee_{\binom{n}{k}} S^k}(H\Q[t]; H\Q). \]
Again,  we obtain an isomorphism
\[ \pi_*\L^\Q_{\bigvee_{k=1}^n\bigvee_{\binom{n}{k}}
  S^k}(\Q[t]; \Q) \cong \bigotimes_{k=1}^n \bigotimes_{\binom{n}{k}}
\HH_*^{[k], \Q}(\Q[t]; \Q)\]
and with the help of \cite[Proposition 2.1]{bhlprz} we can identify
the terms as follows: 
\[ \HH_*^{[k], \Q}(\Q[t]; \Q) \cong \begin{cases}
    \Q[x_k], & \text{ if } k \text{ is even }, \\
    \Lambda_\Q(x_k), & \text{ if } k \text{ is odd. }   \end{cases} \]

\begin{lem}
  There is an isomorphism of graded commutative $\Q$-algebras
  \[  \pi_*\L^\Q_{\bigvee_{k=1}^n\bigvee_{\binom{n}{k}}
  S^k}(\Q[t]/t^m; \Q) \cong \pi_*\L^\Q_{T^n}(\Q[t]; \Q) \otimes
\Tor_{*}^{\pi_*\L^\Q_{T^n}(\Q[t]; \Q)}(\Q, \Q).\]  
\end{lem}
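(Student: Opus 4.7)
The plan is to apply the pushout formula for $\Q[t]/t^m$ established just above to $X = \bigvee_{k=1}^n \bigvee_{\binom{n}{k}} S^k$ in place of $T^n$, and then to compute the homotopy groups of the resulting derived smash via a Tor spectral sequence. By the pushout presentation of $\Q[t]/t^m$ and the pushout-preservation of the Loday construction, one obtains
\[
\L^\Q_X(\Q[t]/t^m; \Q) \simeq B \wedge^L_B \Q,
\]
where $B := \L^\Q_X(\Q[t]; \Q)$, the left copy of $B$ is a $B$-module via the self-map $\mu_m \colon B \to B$ induced by $t \mapsto t^m$, and the right copy of $\Q$ is a $B$-module via the augmentation $\epsilon \colon B \to \Q$ induced by $t \mapsto 0$. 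Stability of $\Q[t]$ then identifies $B$ with $\L^\Q_{T^n}(\Q[t]; \Q)$ compatibly with $\mu_m$ and $\epsilon$.

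The main technical input is that on homotopy, the map $\pi_*(\mu_m) \colon \pi_*B \to \pi_*B$ annihilates every positive-degree element whenever $m \geq 2$. Using the Künneth decomposition $\pi_*B \cong \bigotimes_{k, i} \HH^{[k], \Q}(\Q[t]; \Q)$ recalled in the preceding paragraph, this reduces to a factor-by-factor check. Each single generator of $\HH^{[k], \Q}(\Q[t]; \Q)$ is represented via Pirashvili's bar-type model by a class built from a copy of $t$ and its differential; applying $\mu_m$ replaces this $t$ with $t^m$, and the chain rule introduces a factor of $t^{m - 1}$ that vanishes after passing to the $\Q$-coefficients. Consequently $(\pi_*B)_{\mu_m}$ is isomorphic as a $\pi_*B$-module to $\pi_*B \otimes_\Q \Q_\epsilon$, with the $\pi_*B$-action confined to the $\Q_\epsilon$ factor through $\epsilon$.

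With this module structure in hand, I would invoke the Tor spectral sequence
\[
E^2_{p, q} = \Tor^{\pi_*B}_p\bigl((\pi_*B)_{\mu_m},\ \Q_\epsilon\bigr)_q \Longrightarrow \pi_{p + q}(B \wedge^L_B \Q).
\]
Since $\pi_*B$ is a free graded commutative $\Q$-algebra on positive-degree generators, and hence $\Q$-flat, base change gives $E^2 \cong \pi_*B \otimes_\Q \Tor^{\pi_*B}(\Q, \Q)$. The spectral sequence then collapses at $E^2$: every algebra generator of $E^2$ lies in Tor-degree $0$ or $1$, and for $r \geq 2$ the differential $d_r$ sends such a class to negative Tor-degree, hence vanishes; by the Leibniz rule it vanishes everywhere. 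No extension problems arise over $\Q$, yielding the claimed isomorphism of graded commutative $\Q$-algebras.

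The main obstacle is the second step, verifying that $\mu_m$ kills every positive-degree homotopy class of $B$. For $k = 1$ this is the immediate calculation $\mu_m(\epsilon t) = m t^{m - 1} \epsilon t \equiv 0$ modulo the augmentation ideal, but for higher $k$ it rests on Pirashvili's explicit description of the generators of $\HH^{[k], \Q}(\Q[t]; \Q)$, which is where the real technical content lies.
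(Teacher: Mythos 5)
Your route is genuinely different from the paper's, and it can be made to work, but as written it contains one false assertion and one incomplete step that both deserve attention.

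The paper's proof is a direct bookkeeping argument: it quotes Proposition \ref{prop:spherestruncated} to identify each factor $\HH_*^{[k],\Q}(\Q[t]/t^m;\Q)$ as a free graded commutative $\Q$-algebra $gF_\Q(x_k,y_{k+1})$, uses the K\"unneth decomposition for the wedge to write the left side as $\bigotimes_{k}\bigotimes_{\binom{n}{k}} gF_\Q(x_k,y_{k+1})$, and then computes the right side as the same free algebra by observing that $\pi_*\L^\Q_{T^n}(\Q[t];\Q)\cong \bigotimes gF_\Q(x_k)$ and that its $\Tor$ with $\Q$ over $\Q$ contributes the $y_{k+1}$'s. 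No spectral sequence is invoked; one just matches generators. Your argument instead reproves the relevant input from scratch via the pushout $B\wedge^L_B\Q$ and the Tor spectral sequence, hinging on the claim that $\mu_m$ factors through the augmentation on $\pi_*B$. That is a legitimate alternative, but it carries more overhead.

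Two issues. First, the sentence ``Stability of $\Q[t]$ then identifies $B$ with $\L^\Q_{T^n}(\Q[t];\Q)$ compatibly with $\mu_m$ and $\epsilon$'' is false for $\mu_m$. The stability equivalence is not natural, and indeed the entire point of Theorem \ref{thm:notstable} is that $\mu_n$ behaves very differently on the two sides: on the torus it sends the diagonal class $\Delta_n$ to $n!\,\vol_n\neq 0$ (Proposition \ref{prop:relations}), whereas on the wedge you correctly observe that it factors through the augmentation. If the identification were $\mu_m$-compatible, the paper's main theorem would fail. Luckily you never use the false compatibility---you only need the abstract isomorphism of graded $\Q$-algebras $\pi_*B\cong\pi_*\L^\Q_{T^n}(\Q[t];\Q)$ to rewrite the base ring---but the claim should be deleted, since it is precisely the misconception the paper is designed to refute.

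Second, the verification that $\mu_m$ annihilates positive-degree homotopy of $B$ is only sketched. The ``chain rule'' computation $\mu_m(\varepsilon t)=mt^{m-1}\varepsilon t\equiv 0$ is literally correct only for $k=1$. For $k\geq 2$ one should argue inductively rather than by explicitly chasing iterated bar representatives: since $\HH^{[k+1],\Q}(\Q[t];\Q)\simeq\pi_*\big(\Q\wedge^L_{\L_{S^k}(\Q[t];\Q)}\Q\big)$ with a collapsing Tor spectral sequence, and since by the inductive hypothesis $\mu_m$ on $\HH^{[k],\Q}(\Q[t];\Q)$ equals $\eta\circ\epsilon$, the induced map on $\Tor^{\HH^{[k],\Q}(\Q[t];\Q)}(\Q,\Q)$ factors through $\Tor^\Q(\Q,\Q)\cong\Q$ and is therefore zero in positive degrees. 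With the wedge K\"unneth decomposition $\pi_*B\cong\bigotimes_{k,i}\HH^{[k],\Q}(\Q[t];\Q)$ and the naturality of $\mu_m$, this gives the claim on all of $\pi_*B$. Once those two points are repaired, the base change $\Tor^{\pi_*B}\big((\pi_*B)_{\mu_m},\Q\big)\cong\pi_*B\otimes_\Q\Tor^{\pi_*B}(\Q,\Q)$ and the collapse of the Tor spectral sequence over $\Q$ are fine, and the lemma follows.
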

\begin{proof}
  We already know that
   \begin{equation} \label{eq:pinchedloday}
     \pi_*\L^\Q_{\bigvee_{k=1}^n\bigvee_{\binom{n}{k}} 
  S^k}(\Q[t]/t^m; \Q) \cong
\bigotimes_{k=1}^n\bigotimes_{\binom{n}{k}} \HH_*^{[k],
  \Q}(\Q[t]/t^m; \Q) \cong
\bigotimes_{k=1}^n\bigotimes_{\binom{n}{k}} gF_\Q(x_k,y_{k+1})
\end{equation}
where $gF_\Q(x_k)$ denotes the free graded commutative
$\Q$-algebra generated by an element $x_k$ in degree $k$ and
$gF_\Q(x_k,y_{k+1})$ denotes the free graded commutative 
$\Q$-algebra generated by an element $x_k$ in degree $k$ and an
element $y_{k+1}$ in degree $k+1$. 

As $\pi_*\L^\Q_{T^n}(\Q[t]; \Q) \cong
\bigotimes_{k=1}^n\bigotimes_{\binom{n}{k}} gF_\Q(x_k)$, we obtain
that
\[ \Tor_{*}^{\pi_*\L^\Q_{T^n}(\Q[t]; \Q)}(\Q, \Q) \cong
  \bigotimes_{\ell=2}^{n+1}\bigotimes_{\binom{n}{\ell-1 }}
  gF_\Q(y_\ell)\]  and hence the tensor product of the two gives a
graded commutative $\Q$-algebra isomorphic to \eqref{eq:pinchedloday}
\end{proof}

Let $A_*$ denote the graded commutative $\Q$-algebra
$\pi_*\L^\Q_{T^n}(\Q[t]; \Q)$ and $B_*$ denote
$\pi_*\L^\Q_{T^n}(\Q[t]; \Q)$ viewed as an $A_*$-module via a morphism
of graded commutative $\Q$-algebras $f\colon A_* \ra B_*$.

\begin{lem} \label{lem:modulestructure}
Let $f_1 \colon A_* \ra B_*$ be the morphism $f_1 =  \eta_{B_*} \circ
\varepsilon_{A_*}$ where $\varepsilon_{A_*} \colon A_* \ra \Q$ is the
augmentation that sends all elements of positive degree to zero and
where $\eta_{B_*} \colon \Q \ra B_*$ is the unit  map of $B_*$. Let
$f_2 \colon A_* \ra B_*$ be any map of graded commutative algebras
such that there is an element $x \in A_n$ with $n > 0$ such that
$f_2(x) = w \neq 0$. Let $\Tor_{*,*}^{A_*, f_i}(B_*, \Q)$ denote the
graded 
$\Tor$-groups calculated with respect to the $A_*$-module structure on
$B_*$ given by $f_i$. Then
\[ \dim_\Q (\Tor_{*,*}^{A_*, f_2}(B_*, \Q))_n < \dim_\Q
  (\Tor_{*,*}^{A_*, f_1}(B_*, \Q))_n\]
where $(\Tor_{*,*}^{A_*, f_i}(B_*, \Q))_n =
\bigoplus_{r+s=n}\Tor_{r,s}^{A_*, f_i}(B_*, \Q)$. 
\end{lem}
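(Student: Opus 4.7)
The plan is to compute both Tor groups via an explicit Koszul-type free resolution of $\Q$ over $A_*$ and to exhibit an element that forces a strict dimension drop in total degree $n$ when passing from $f_1$ to $f_2$.

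Since $A_*$ is a tensor product of free graded commutative $\Q$-algebras on single generators---one generator $x_{k,j}$ of internal degree $k$ for each $1 \le k \le n$ and $1 \le j \le \binom{n}{k}$---I would use the standard Koszul resolution $P_\bullet = A_* \otimes_\Q E$ of $\Q$. Here $E$ is the free graded commutative $\Q$-algebra on dual generators $y_{k,j}$, each having internal degree $k$ and homological degree $1$, with $y_{k,j}$ exterior (resp.\ polynomial) in $E$ whenever $x_{k,j}$ is polynomial (resp.\ exterior) in $A_*$. The differential is the $A_*$-linear derivation with $d(y_{k,j}) = x_{k,j}$; over $\Q$ the usual signs cause no difficulty and $P_\bullet$ is acyclic with $H_0 = \Q$. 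For any map $f\colon A_* \to B_*$, tensoring gives a complex $(B_* \otimes_\Q E, \bar d_f)$ where $\bar d_f$ is the $B_*$-linear derivation satisfying $\bar d_f(y_{k,j}) = f(x_{k,j})$, and its bigraded homology computes $\Tor^{A_*, f}_{*,*}(B_*, \Q)$.

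For $f_1 = \eta_{B_*}\circ\varepsilon_{A_*}$, the image of every generator $x_{k,j}$ is zero, so $\bar d_{f_1} = 0$, giving $\Tor^{A_*, f_1}_{*,*}(B_*, \Q) \cong B_* \otimes_\Q E$ as a bigraded $\Q$-vector space and $\dim_\Q\bigl(\Tor^{A_*, f_1}_{*,*}(B_*, \Q)\bigr)_n = \dim_\Q (B_* \otimes_\Q E)_n$. For $f_2$, the assumption $n > 0$ places $x$ in the augmentation ideal of $A_*$, so one may write $x = \sum_{k, j} b_{k,j}\, x_{k,j}$ with $b_{k,j} \in A_{n - k}$. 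The element $\zeta = \sum_{k, j} f_2(b_{k,j}) \otimes y_{k,j}$ lies in $(B_* \otimes_\Q E)_{1, n}$, which has total degree $n+1$, and the derivation property together with the fact that $f_2$ is an algebra map yields $\bar d_{f_2}(\zeta) = \sum_{k, j} f_2(b_{k,j}\, x_{k,j}) \otimes 1 = f_2(x) \otimes 1 = w \otimes 1 \ne 0$ in $(B_* \otimes_\Q E)_{0, n}$, which has total degree $n$. Hence the component of $\bar d_{f_2}$ mapping from total degree $n+1$ to total degree $n$ has rank at least $1$.

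I would conclude by the elementary estimate $\dim H_m \le \dim C_m - \mathrm{rank}(d\colon C_{m+1} \to C_m)$, valid for any chain complex of $\Q$-vector spaces: it yields $\dim_\Q\bigl(\Tor^{A_*, f_2}_{*,*}(B_*, \Q)\bigr)_n \le \dim_\Q(B_* \otimes_\Q E)_n - 1 < \dim_\Q\bigl(\Tor^{A_*, f_1}_{*,*}(B_*, \Q)\bigr)_n$. The one place that requires care is to verify that the Koszul differential is well-defined with consistent signs across factors of mixed internal parity (so that $d^2 = 0$); this is entirely standard over $\Q$, where $2$ is invertible, and is the only use of the characteristic $0$ hypothesis. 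The core idea is the construction of the witness $\zeta$: its image $w \otimes 1$ is a nontrivial cycle in total degree $n$ under $\bar d_{f_1}$ but is a boundary under $\bar d_{f_2}$, forcing the claimed strict drop.
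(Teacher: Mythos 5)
Your proof is correct and follows essentially the same strategy as the paper: you use a minimal free resolution of $\Q$ over $A_*$ along which the $f_1$-twisted differential vanishes (so $\Tor^{A_*,f_1}(B_*,\Q)$ is as large as possible) and observe that the $f_2$-twisted differential is nonzero from total degree $n+1$ into total degree $n$ because $f_2(x)=w\neq 0$ while $\varepsilon_{A_*}(x)=0$. The only cosmetic difference is that you realize the minimal resolution explicitly as a Koszul complex using the presentation of $A_*$ as a free graded-commutative algebra, whereas the paper constructs the minimal resolution abstractly degree by degree; both give $\Tor^{A_*,f_1}_{r,s}(B_*,\Q)\cong (B_*\otimes E)_{r,s}$ and force the strict drop via the same witness, namely that $w\otimes 1$ is a boundary (equivalently, dies in $B_*\otimes_{A_*}\Q$).
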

\begin{proof}   
Let $P_* \ra \Q$ be an $A_*$-free resolution of $\Q$. We want to
choose $P_*$ efficiently, in the following sense: since $\Q$ is 
concentrated in degree zero and $A_0=\Q$,  we can choose $P_0$ to be
$A_*$.  Then we choose $P_1=  \bigoplus_{j \in I_1} \Sigma^{n_j} A_*$
with the minimal possible number of copies of $A_*$ in each suspension
degree, beginning from the bottom (that is, the only reason we add a
new $\Sigma^n A_*$ is if there is a class in $P_{\ell-1}$ that has not
yet been hit by the suspensions of $A_*$ in lower dimensions that we
already have) to guarantee that 
$d_1 \colon \bigoplus_{j \in I_\ell} \Sigma^{n_j} A_0 \to P_0$ is
injective, and moreover  
\[\ker(d_1 \colon  \bigoplus_{j \in I_1} \Sigma^{n_j} A_0 \to P_0)
\subseteq \bigoplus_{j \in I_1 } \Sigma^{n_j} \ker(\epsilon_{A_*}). \]
And of course we need $\Ima(d_1\colon P_1\to P_0) =
\ker(\epsilon_{A_*} \colon A_*\to\Q)$ and similarly for higher
$\ell$. For every $\ell>0$  we choose
$P_\ell$ with 
\[ P_\ell = \bigoplus_{j \in I_\ell} \Sigma^{n_j} A_*\]
so that $d_\ell \colon \bigoplus_{j \in I_\ell} \Sigma^{n_j} A_0 \to
P_{\ell-1}$ is injective and moreover 
\[ \ker(d_\ell \colon \bigoplus_{j \in I_\ell} \Sigma^{n_j} A_0 \to
  P_{\ell-1}) \subseteq \bigoplus_{j \in I_\ell }\Sigma^{n_j}
  \ker(\epsilon_{A_*}). \]
Then we get
\[ \Ima(d_\ell \colon P_\ell\to P_{\ell-1} ) = \ker(d_{\ell-1} \colon
  P_{\ell-1}\to P_{\ell-2})\subseteq \bigoplus_{j \in I_{\ell
      -1}}\Sigma^{n_j} \ker(\epsilon_{A_*}).\] 
The Tor groups we want are   the homology groups of
\[B_* \otimes_{A_*} P_\ell = B_* \otimes_{A_*} \bigoplus_{j \in
    I_\ell} \Sigma^{n_j} A_* \cong \bigoplus_{j \in
    I_\ell} \Sigma^{n_j} B_*\]
with respect to the differential $\id \otimes d$ for either
$A_*$-module structure. 

\smallskip
As $f_1 \colon A_* \ra B_*$ factors through
the augmentation, we claim that the differentials in the chain complex
\[ B_* \otimes_{A_*} P_\bullet\]
with the $A_*$-module structure given by $f_1$ are trivial: they are
of the form $\id \otimes d$ where $d$ is the 
differential of $P_\bullet$. As $d$ sends every $\Sigma^{n_j} 1 \in
\bigoplus_{j \in 
    I_\ell} \Sigma^{n_j} A_*$ to something in 
    $\bigoplus_{j \in I_{\ell -1}}\Sigma^{n_j} \ker(\epsilon_{A_*})$, 
\[ (\id \otimes d)(b\otimes _{A_*} \Sigma^{n_j} 1 )\in \Q\{b\} \otimes_{A_*}
  \bigoplus_{j \in I_{\ell -1}}\Sigma^{n_j} \ker(\epsilon_{A_*})=0 \]
for all $b\in B_*$.  
Hence
\[ \Tor_{\ell,s}^{A_*, f_1}(B_*, \Q) = (\bigoplus_{j \in I_\ell}
  \Sigma^{n_j} B_*)_s = \bigoplus_{j \in I_\ell}
  \Sigma^{n_j} B_{*-s}. \] 
In particular, we have of course $\Tor_{0,s}^{A_*, f_1}(B_*, \Q) =
B_s$ for all $s$.

\smallskip
For the $A_*$-module structure on $B_*$ given by $f_2$ we obtain that
\[ \Tor_{0,*}^{A_*, f_2}(B_*, \Q) = B_* \otimes_{A_*} \Q \]
but here, the tensor product results in a nontrivial quotient of
$B_*$. Recall that we assumed that $f_2(x)=w\neq 0$.  The element $w \otimes 1\in B_* \otimes_{A_*} \Q$ is trivial because the degree of $x$
is positive and hence $\varepsilon_{A_*}(x) = 0$: 
\[ w \otimes 1 = f_2(x) \otimes 1 = 1 \otimes \varepsilon_{A_*}(x) = 1
  \otimes 0 = 0. \]
Therefore, 
\[ \dim_\Q \Tor_{0,n}^{A_*, f_2}(B_*, \Q) < \dim_\Q \Tor_{0,n}^{A_*,
    f_1}(B_*, \Q). \]
The other $\Tor$-terms in total degree $n$ of the form
$\Tor_{r,s}^{A_*, f_2}(B_*, \Q)$ with $r+s=n$ are subquotients of
\[ \bigoplus_{j \in I_r} \Sigma^{n_j} B_{*-s} \] and hence for all
$(r,s)$ with $r+s =n$ and $r > 0$ we obtain 
\[ \dim_\Q \Tor_{r,s}^{A_*, f_2}(B_*, \Q) \leq \dim_\Q
  \Tor_{r,s}^{A_*, f_1}(B_*, \Q). \]
\end{proof}
Note that if $f \colon A_* \ra B_*$ factors through the augmentation $A_* \ra
\Q$ then
\[ \Tor_\ell^{A_*}(B_*, \Q) \cong B_* \otimes \Tor_\ell^{A_*}(\Q,
  \Q). \]

We use Lemma \ref{lem:modulestructure} to prove the following result. 
\begin{thm} \label{thm:notstable}
  Let $n \geq 2$. Then
\[ \dim_\Q\pi_n \L^\Q_{T^n}(\Q[t]/t^n;\Q) < \dim_\Q\pi_n
  \L^\Q_{\bigvee_{k=1}^n\bigvee_{\binom{n}{k}} S^k}(\Q[t]/t^n;\Q). \] 
In particular, for all $n \geq 2$ the pair $(\Q[t]/t^n; \Q)$ is not
stable and $\Q \ra \Q[t]/t^n$ is not multiplicatively stable. 
\end{thm}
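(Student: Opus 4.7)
The plan is to use the pushout description of $\Q[t]/t^n$ to get a K\"unneth spectral sequence and then invoke Lemma~\ref{lem:modulestructure}. The derived pushout $\Q[t]/t^n \simeq \Q \otimes^L_{\Q[t]} \Q[t]$, with structure maps $\varepsilon\colon t\mapsto 0$ and $\phi\colon t\mapsto t^n$, together with the fact that $\L^\Q_{T^n}(-;\Q)$ preserves homotopy pushouts, gives the derived smash product description already displayed in the excerpt. Using stability of $\Q[t]$ and Proposition~\ref{prop:spherestruncated}, I identify
\[
A_*=B_*=\pi_*\L^\Q_{T^n}(\Q[t];\Q)\cong \bigotimes_{k=1}^n\bigotimes_{\binom{n}{k}} gF_\Q(x_k),
\]
and the associated K\"unneth spectral sequence takes the form
\[
E^2_{p,q}=\Tor^{A_*,\,f_2}_{p,q}(B_*,\Q)\Longrightarrow \pi_{p+q}\L^\Q_{T^n}(\Q[t]/t^n;\Q),
\]
where $f_2=\pi_*\L^\Q_{T^n}(\phi)$ controls the $A_*$-module structure on $B_*$, and $\Q$ is the trivial $A_*$-module via $\varepsilon_{A_*}$.

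The lemma immediately preceding Theorem~\ref{thm:notstable} rewrites the right-hand side of the desired inequality as $\dim_\Q(\Tor^{A_*,\,f_1}_{*,*}(B_*,\Q))_n$, where $f_1=\eta_{B_*}\circ\varepsilon_{A_*}$ is the map appearing in Lemma~\ref{lem:modulestructure}. Applying Lemma~\ref{lem:modulestructure} to the pair $(f_1,f_2)$ then produces the strict inequality
\[
\dim_\Q\bigl(\Tor^{A_*,\,f_2}_{*,*}(B_*,\Q)\bigr)_n<\dim_\Q\bigl(\Tor^{A_*,\,f_1}_{*,*}(B_*,\Q)\bigr)_n.
\]
Since K\"unneth spectral sequence differentials can only decrease dimensions, $\dim_\Q\pi_n\L^\Q_{T^n}(\Q[t]/t^n;\Q)\le \dim_\Q E^2_n$; chaining these three inequalities proves the main claim, and the non-(multiplicative-)stability statements are then formal from the definitions recalled in \cite[\S 2]{lr}.

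The main obstacle is verifying the hypothesis of Lemma~\ref{lem:modulestructure}, namely exhibiting some $x\in A_n$ of positive degree with $f_2(x)\ne 0$. The naive calculation using $\phi_*(dt)=nt^{n-1}dt$ tends to collapse after reducing along $\varepsilon$ when $n\ge 2$, so one must produce a chain-level witness for a top-degree class---for instance one associated with the top cell of $T^n$---which survives under $\phi$. Ruling out the degeneracy $f_2 = f_1$ is the delicate heart of the proof, and the specific choice $m=n$ is what makes the surviving class land in exactly the right degree to force the strict dimension drop in degree $n$.
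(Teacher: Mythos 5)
Your framework is exactly the paper's: the derived pushout $\Q[t]/t^n \simeq \Q \otimes^L_{\Q[t],\phi} \Q[t]$ (with $\phi\colon t\mapsto t^n$), the Tor spectral sequence, the identification of the bouquet side with $\Tor^{A_*,f_1}_{*,*}(B_*,\Q)$ from the lemma just before the theorem, and Lemma~\ref{lem:modulestructure} applied with $f_2 = \pi_*\L^\Q_{T^n}(\phi)$. Chaining the spectral-sequence bound with Lemma~\ref{lem:modulestructure} is indeed how the strict inequality is obtained. You have also correctly diagnosed where the difficulty lies: the induced map on $\HH_1^\Q(\Q[t];\Q)$ sends $\varepsilon t$ to $n t^{n-1}\varepsilon t$, which vanishes after reduction when $n\geq 2$, so a positive-degree element of $A_*$ with nonzero $f_2$-image must be found higher up.

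The gap is that you do not produce such an element, and this is precisely where all the mathematical work of the paper's proof lives. Saying ``one must produce a chain-level witness for a top-degree class---for instance one associated with the top cell of $T^n$---which survives under $\phi$'' names the task but does not accomplish it; you offer no reason the image is nonzero, and a priori it could vanish for the same reason the degree-one class did. The paper resolves this with Proposition~\ref{prop:relations}, built on the Split and Orthogonal Moving Lemmas: working in the normalized $n$-chain complex for $T^n$, it shows by an induction on $t$-degree that $(t^{k})_{\mathbf{1}_n} \sim \sum_{\two_1+\cdots+\two_k=\mathbf{1}_n,\ \two_i\neq\mathbf{0}} \prod_i t_{\two_i}$, so that the diagonal class $\Delta_n = t_{\mathbf{1}_n}$ of degree $n$ maps under $t\mapsto t^n$ to $n!\,\vol_n = n!\prod_{i=1}^n t_{\mathbf{e}_i}$ (Equation~\eqref{eq:interesting-class}). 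The class $\vol_n$ lives in the sub-bouquet $\bigvee_{\binom{n}{1}} S^1 \subset T^n$ and is manifestly nonzero in $\pi_n\L^\Q_{T^n}(\Q[t];\Q)$, and $n!\neq 0$ in $\Q$. That explicit nonvanishing computation is the missing core of your argument; without it the proposal reduces the theorem to an unproved claim.
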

The $n=2$ case of  Theorem \ref{thm:notstable} was obtained earlier by
Dundas and Tenti \cite{DT}. 

Before we prove the theorem, we state the following integral version
of it: 
\begin{cor} \label{cor:integral}
  For all $n \geq 2$ the pair $(\Z[t]/t^n; \Z)$ is not stable and $\Z
  \ra \Z[t]/t^n$ is not multiplicatively stable. 
\end{cor}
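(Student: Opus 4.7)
The plan is to deduce the integral statement from Theorem \ref{thm:notstable} by base change along $H\Z \to H\Q$. The key input is that $\Z[t]/t^n$ is a finite \emph{free} $\Z$-module, so the canonical map
\[ H\Z[t]/t^n \wedge_{H\Z} H\Q \lra H(\Z[t]/t^n \otimes_\Z \Q) = H\Q[t]/t^n \]
is an equivalence of commutative $H\Q$-algebra spectra, and of course $H\Z \wedge_{H\Z} H\Q \simeq H\Q$. Since the Loday construction is a realization of a simplicial spectrum whose $m$-simplices are iterated smash products of copies of $H\Z[t]/t^n$ and $H\Z$ over $H\Z$, and since $(-)\wedge^L_{H\Z}H\Q$ commutes with both smash products over $H\Z$ and geometric realization, we obtain a natural equivalence
\[ \L^{H\Z}_X(H\Z[t]/t^n; H\Z) \wedge^L_{H\Z} H\Q \;\simeq\; \L^{H\Q}_X(H\Q[t]/t^n; H\Q) \]
of commutative $H\Q$-algebra spectra for every pointed simplicial set $X$.

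Suppose for contradiction that $(\Z[t]/t^n; \Z)$ were stable. Writing $W := \bigvee_{k=1}^n \bigvee_{\binom{n}{k}} S^k$, the equivalence $\Sigma T^n \simeq \Sigma W$ would yield
\[ \L^{H\Z}_{T^n}(H\Z[t]/t^n; H\Z) \;\simeq\; \L^{H\Z}_{W}(H\Z[t]/t^n; H\Z). \]
Smashing both sides with $H\Q$ over $H\Z$ and applying the base-change equivalence above gives
\[ \L^{H\Q}_{T^n}(H\Q[t]/t^n; H\Q) \;\simeq\; \L^{H\Q}_{W}(H\Q[t]/t^n; H\Q), \]
which equates the $\Q$-dimensions of $\pi_n$ on the two sides. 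This contradicts the strict inequality in Theorem \ref{thm:notstable}.

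For the second half of the corollary, if $\Z \to \Z[t]/t^n$ were multiplicatively stable, then by Remark 2.5 of \cite{lr} (recalled in the introduction) the pair $(\Z[t]/t^n; C)$ would be stable for every cofibrant commutative $H\Z[t]/t^n$-algebra $C$, in particular for a cofibrant replacement of $H\Z$, contradicting what we just proved. The only substantive step is the base-change equivalence, which amounts to the standard fact that Loday constructions commute with extension of scalars along a map of commutative ring spectra when the relevant algebras are flat over the base; the finite freeness of $\Z[t]/t^n$ over $\Z$ makes this routine in the case at hand.
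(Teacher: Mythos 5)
Your proof is correct and follows essentially the same route as the paper: both reduce to the rational case of Theorem \ref{thm:notstable} via base change along $\Z \to \Q$. The paper phrases it more tersely as ``localizing at $\Z \setminus \{0\}$'' at the level of homotopy groups, whereas you make the base-change equivalence $\L^{H\Z}_X(H\Z[t]/t^n; H\Z) \wedge^L_{H\Z} H\Q \simeq \L^{H\Q}_X(H\Q[t]/t^n; H\Q)$ explicit at the spectrum level, but these amount to the same argument since $\Q$ is flat over $\Z$.
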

\begin{proof}[Proof of Corollary \ref{cor:integral}]
If for some $n \geq 2$ the pair $(\Z[t]/t^n; \Z)$ were stable, then in
particular
\[ \pi_* \L^\Z_{T^n}(\Z[t]/t^n;\Z) \cong \pi_*
  \L^\Z_{\bigvee_{k=1}^n\bigvee_{\binom{n}{k}} S^k}(\Z[t]/t^n;\Z). \]
Localizing at $\Z \setminus \{0\}$ would then imply
\[ \pi_* \L^\Q_{T^n}(\Q[t]/t^n;\Q) \cong \pi_*
  \L^\Q_{\bigvee_{k=1}^n\bigvee_{\binom{n}{k}} S^k}(\Q[t]/t^n;\Q) \]
in contradiction to Theorem \ref{thm:notstable}. 
\end{proof}  
\subsection{Proof of Theorem \ref{thm:notstable}}
We prove Theorem
\ref{thm:notstable} by identifying an element in $A_*$ of positive
degree that is sent to a nontrivial element of $B_*$. More precisely,
we will show 
that the map that sends $t$ to $t^n$ sends the indecomposable element in
$\pi_n\L_{S^n}^\Q(\Q[t]; \Q)$
up to a unit to the element
\[ dt_1 \wedge \ldots \wedge
  dt_n \in \pi_n\L_{S^1 \vee \ldots \vee S^1}^\Q(\Q[t]; \Q).  \]
We consider both elements as elements of $\pi_n\L_{T^n}^\Q(\Q[t]; \Q)$
via the inclusions of summands 
\[ \pi_n\L_{S^1 \vee \ldots \vee S^1}^\Q(\Q[t]; \Q) \subset
  \pi_n\L_{T^n}^\Q(\Q[t]; \Q) \supset \pi_n\L_{S^n}^\Q(\Q[t]; \Q).\]

In the following we consider $T^n$ as the diagonal of an $n$-fold
simplicial set where every $([p_1], \ldots, [p_n]) \in (\Delta)^n$
is mapped to $S^1_{p_1} \times \ldots \times S^1_{p_n}$. Then
$\L^\Q_{T^n}(\Q[t];\Q)$ can also be interpreted as the diagonal of an
$n$-fold simplicial $\Q$-vector space with an associated $n$-chain
complex. By abuse of notation we still denote this $n$-chain complex
by $\L^\Q_{T^n}(\Q[t];\Q)$.

We use the following notation concerning the $n$-chain complex
$\L^\Q_{T^n}(\Q[t];\Q)$:  
\begin{itemize}
\item $\mathbf{0}_m=(0,0,\ldots,0)$ and $\mathbf{1}_m=(1,1,\ldots,1)$
  are the vectors containing only $0$ or $1$, respectively, 
  repeated $m$ times. 
\item A vector $\mathbf{V} \in \mathbb{N}^n$ is viewed as a
  \emph{multi-degree} of an 
  element in the $n$-chain complex. 
\item  A vector $\mathbf{v} \in \mathbb{N}^n$ for which $\mathbf{0}_n
  \leq \mathbf{v} \leq 
  \mathbf{V}$ in every entry can be thought of as specifying a
  \emph{coordinate} in the multi-matrix of an 
  element in multi-degree $\mathbf{V}$.
  We call the $i$th entry of a vector $\mathbf{v}\in \mathbb{N}^n$ the
  $i$th  \emph{place} in $\mathbf{v}$. 
  It is always assumed that $\mathbf{V} = \mathbf{1}_n$ if not
  otherwise specified. 
\item Each element of $\L^\Q_{T^n}(\Q[t];\Q)$ in degree $\mathbf{V}=(v_1,
  \ldots, v_n)$ is a 
  multi-matrix of dimension  ${(v_1+1, \ldots, v_n+1)}$ with entries in
  $\Q[t]$ at coordinates $\mathbf{v} \neq \mathbf{0}_n$ and  an entry
  in $\Q$ at  
  coordinate $\mathbf{0}_n$. 
\item $x_{\mathbf{v}}$ for $x \in \Q[t]$ and $\mathbf{v}\in
  \mathbb{N}^n$ is the multi-matrix with term $x$ at 
  coordinate $\mathbf{v}$ and $1$ at other coordinates. We say a term
  is trivial if it is $1$ in all its coordinates. 
\item Therefore $x_{\mathbf{v}} \cdot y_{\mathbf{w}}$ for $x,y \in \Q[t]$ and
  $\mathbf{v}, \mathbf{w} \in 
  \mathbb{N}^n$ is the  product of $x_{\mathbf{v}}$ and
  $y_{\mathbf{w}}$  in degree 
  $\mathbf{V}$ of $\L^\Q_{T^n}(\Q[t],\Q)$ regarded as an $n$-simplicial
  ring. Explicitly, if $\mathbf{v} \neq \mathbf{w}$, it is the
  multi-matrix with $x$ at coordinate 
  $\mathbf{v}$, $y$ at coordinate $\mathbf{w}$, and $1$ elsewhere; if
  $\mathbf{v} = \mathbf{w}$, it 
  is the multi-matrix with $xy$ at coordiante $\mathbf{v}$ and $1$ elsewhere.
\end{itemize}

\newcommand{\ba}{\mathbf{a}}
\newcommand{\bb}{\mathbf{b}}

Suppose that $C_{\bullet}$ is an $n$-chain complex with differentials
$\mathrm{d}_1, \ldots, \mathrm{d}_n$ in the $n$ different directions, then 
the total chain complex $\mathrm{Tot}(C_{\bullet})$ has differential
in component $(v_1, \ldots, v_n)$ given by
\begin{equation*}
\mathrm{d} = \sum_{i=1}^n (-1)^{v_1 + \ldots + v_{i-1}} \mathrm{d}_i.
\end{equation*}
In our case we will have each $\mathrm{d}_i=\sum_{j=0}^{v_i} (-1)^j 
 d_{i,j}$ where $d_{i,j} \colon C_{v_1, \ldots, v_n} \ra C_{v_1,
   \ldots, v_i -1, \ldots v_n}$ is the face map.  We are interested in
 low degrees, especially in 
$\mathbf{1}_n$. Any $v_i=1$ will imply $\mathrm{d}_i=0$  
since the $\mathrm{d}_i$ are cyclic differentials and $\Q[t]$ is commutative.
This allows us to eliminate the $\mathrm{d}_i$ from $\mathrm{d}$.
We have the following three lemmas about homologous classes and tori
of different dimensions: 
\begin{lem}[Split Moving Lemma]
  \label{lem:moving-1}
  Let $\ba,\bb$ be coordinates in degree $\mathbf{1}_{n-1}$ (that is,
  in $2\times 2\times\ldots\times 2$-dimensional matrices).  Then 
\[x_{(\mathbf{a},1)} \cdot y_{(\mathbf{b},1)} \sim
  x_{(\mathbf{a},0)} \cdot y_{(\mathbf{b},1)} + x_{(\mathbf{a},1)}
  \cdot y_{(\mathbf{b},0)}.\] 
\end{lem}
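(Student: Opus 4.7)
The plan is to realize the relation as an explicit boundary by lifting one step in the last simplicial direction. Concretely, I will use
\[ z := x_{(\mathbf{a},\,1)} \cdot y_{(\mathbf{b},\,2)} \in \L^\Q_{T^n}(\Q[t];\Q)_{(\mathbf{1}_{n-1},\,2)}, \]
where the last coordinate is now read in the standard simplicial model of $S^1$ in degree $2$, whose non-basepoint simplices $s_0\alpha, s_1\alpha$ are indexed by $1,2$. Since the first $n-1$ entries of the multi-degree $(\mathbf{1}_{n-1},2)$ are still all equal to $1$, the same commutativity observation invoked just before the lemma forces $\mathrm{d}_i(z)=0$ for every $i<n$. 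So the total differential of $z$ collapses to $(-1)^{n-1}\mathrm{d}_n(z)$, with the sign coming from the total-complex convention $\mathrm{d}=\sum_i(-1)^{v_1+\cdots+v_{i-1}}\mathrm{d}_i$ stated in the paper.

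The second step is to compute $\mathrm{d}_n(z) = d_{n,0}(z) - d_{n,1}(z) + d_{n,2}(z)$ by reading off how the face maps $d_0,d_1,d_2$ of $S^1$ act on the last-coordinate positions $0,1,2$: $d_0$ absorbs position $2$ into position $0$ and fixes position $1$; $d_1$ multiplies positions $1$ and $2$ together at position $1$; and $d_2$ absorbs position $1$ into position $0$ and moves position $2$ to position $1$. Applying each of these to $z$ in the last coordinate only, while leaving the first $n-1$ coordinates untouched, yields
\begin{align*}
d_{n,0}(z) &= x_{(\mathbf{a},1)} \cdot y_{(\mathbf{b},0)}, \\
d_{n,1}(z) &= x_{(\mathbf{a},1)} \cdot y_{(\mathbf{b},1)}, \\
d_{n,2}(z) &= x_{(\mathbf{a},0)} \cdot y_{(\mathbf{b},1)}.
\end{align*}
Combining these with alternating signs and multiplying by the total-complex sign gives
\[ (-1)^n\mathrm{d}(z) = x_{(\mathbf{a},1)}\cdot y_{(\mathbf{b},1)} \;-\; x_{(\mathbf{a},1)}\cdot y_{(\mathbf{b},0)} \;-\; x_{(\mathbf{a},0)}\cdot y_{(\mathbf{b},1)}, \]
which is precisely the required boundary.

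The only thing to monitor is the bookkeeping: the action of the $S^1$ face maps in degree $2$ on the multi-matrix entries, and the sign $(-1)^{n-1}$ that turns $\mathrm{d}_n$ into the total differential. No separate case analysis is needed for $\mathbf{a}=\mathbf{b}$ because the multi-matrix multiplication convention of the paper then gives $d_{n,1}(z)=(xy)_{(\mathbf{a},1)}=x_{(\mathbf{a},1)}\cdot y_{(\mathbf{a},1)}$, consistent with the degenerate reading of the statement. There is no conceptual obstacle: the lemma is essentially the assertion that $s_1\alpha\in S^1_2$ supplies, through the Loday construction, a null-homotopy that splits a product of two ``$1$''-position entries into the sum of the two ways of moving one of them to position ``$0$''.
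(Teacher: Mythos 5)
Your proposal is correct and follows exactly the paper's argument: exhibit the required relation as the boundary of $x_{(\mathbf{a},1)}\cdot y_{(\mathbf{b},2)}$, observe that the first $n-1$ boundary directions vanish on multi-degree $(\mathbf{1}_{n-1},2)$, and compute $\mathrm{d}_n$ explicitly. The only difference is that you index the two non-degenerate $2$-simplices of $S^1$ in the opposite order from the standard Hochschild convention (so your descriptions of $d_0$ and $d_2$ are swapped relative to, e.g., Equation~\eqref{eq:gammaaction} of the paper), but since $d_0$ and $d_2$ carry the same sign in $\mathrm{d}_n = d_{n,0}-d_{n,1}+d_{n,2}$ this changes nothing and you arrive at the same boundary identity.
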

\begin{proof}
Their difference is a boundary of an element of degree
$(\mathbf{1}_{n-1},2)$:    
\[
  \mathrm{d}(x_{(\ba,1)} \cdot y_{(\bb,2)})  =  
   (-1)^{n-1} \mathrm{d}_n (x_{(\ba,1)} \cdot y_{(\bb,2)}) =
  x_{(\ba,0)} \cdot y_{(\bb,1)} - x_{(\ba,1)} \cdot y_{(\bb,1)} +
  x_{(\ba,1)} \cdot y_{(\bb,0)}. 
\]
\end{proof}

For example, when $n=2$, $\ba = 0, \bb = 1$, the difference is 
\begin{equation*}
\mathrm{d}
\begin{pmatrix}
  1 & x & 1 \\
  1 & 1 & y
\end{pmatrix}
= \begin{pmatrix}
  x & 1 \\
  1 & y
\end{pmatrix}
- \begin{pmatrix}
                             1 & x \\
                             1 & y
\end{pmatrix}
+ \begin{pmatrix}
                             1 & x \\
                             y & 1
\end{pmatrix}.
\end{equation*}

Let $\bb$ be a coordinate of a multi-matrix of an element in degree
$\mathbf{1}_{n-m}$ such
  that $\bb \neq 
  \mathbf{0}_{n-m}$.
  For any multi-matrix $c$ in degree $\mathbf{W} \in
  \mathbb{N}^m$, we can form the following multi-matrix in 
  degree 
  $(\mathbf{W}, \mathbf{1}_{n-m}) \in \mathbb{N}^n$:
  \begin{equation*}
    c_{(-,\mathbf{0})} \otimes y_{(\mathbf{0},\bb)} \text{ has terms }
\begin{cases}
  c_{\ba} & \text{ at coordinate }(\ba, \mathbf{0}_{n-m}); \\
  y_{\bb} & \text{ at coordinate }(\mathbf{0}_m, \bb);\\
  1 & \text{ elsewhere.}
\end{cases}
\end{equation*}

\begin{lem}
The following is a chain map:
\begin{equation*}
  \begin{array}{ccc}
    \mathrm{Tot}(\L^\Q_{T^m}(\Q[t],\Q))  & \to
    & \mathrm{Tot}(\L^\Q_{T^n}(\Q[t],\Q)); \\ 
    c & \mapsto &  c_{(-, \mathbf{0})} \otimes y_{(\mathbf{0},\bb)}.
  \end{array}
\end{equation*}
\end{lem}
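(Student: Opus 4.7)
The plan is to verify the chain map property by analyzing the action of each direction-wise differential $\mathrm{d}_i$ separately, using the observation that the $y$-term lives in a ``second block'' coordinate region disjoint from where the $c$-terms sit, and that the last $n-m$ directions carry degree $\mathbf{1}_{n-m}$. I would write down the image
\[ M = c_{(-, \mathbf{0})} \otimes y_{(\mathbf{0}, \bb)} \in \L^\Q_{T^n}(\Q[t],\Q) \]
in multi-degree $(\mathbf{W}, \mathbf{1}_{n-m})$ as having entries $c_{\ba}$ at coordinates $(\ba, \mathbf{0}_{n-m})$, the entry $y$ at $(\mathbf{0}_m, \bb)$, and $1$ elsewhere. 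The goal is then to show
\[ \mathrm{d}^{(n)}(M) = \bigl(\mathrm{d}^{(m)} c\bigr)_{(-, \mathbf{0})} \otimes y_{(\mathbf{0}, \bb)}, \]
where $\mathrm{d}^{(m)}$ and $\mathrm{d}^{(n)}$ denote the total differentials of the respective $n$- and $m$-chain complexes.

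First I would handle $i \leq m$. Since $d_{i,j}$ acts only on the $i$th coordinate (which lies in the first $m$ coordinates), the last $n-m$ coordinates are preserved under any face map. Thus the coordinate $(\mathbf{0}_m, \bb)$ carrying $y$ can only be reached (under $d_{i,j}$) by coordinates whose last $n-m$ entries equal $\bb$; but in $M$, the only such coordinate is $(\mathbf{0}_m, \bb)$ itself, so $y$ passes through unchanged. Similarly, positions $(\ba', \mathbf{0}_{n-m})$ in the image can only be reached from positions $(\ba, \mathbf{0}_{n-m})$, so the multiplication defining $d_{i,j}$ on these coordinates reproduces exactly the face map $d_{i,j}$ of $\L^\Q_{T^m}(\Q[t], \Q)$ applied to $c$. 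Therefore $d_{i,j}(M) = (d_{i,j}c)_{(-,\mathbf{0})} \otimes y_{(\mathbf{0}, \bb)}$, and summing over $j$ with signs gives $\mathrm{d}_i(M) = (\mathrm{d}_i c)_{(-, \mathbf{0})} \otimes y_{(\mathbf{0}, \bb)}$.

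Next, for $i > m$, the multi-degree of $M$ in direction $i$ is $1$. By the observation recorded in the paper just before the Split Moving Lemma (that $\mathrm{d}_i = 0$ whenever $v_i = 1$, because the Hochschild-type boundary $d_{i,0} - d_{i,1}$ vanishes on a commutative ring), we get $\mathrm{d}_i(M) = 0$ for every $i > m$. Finally, I would observe that the sign $(-1)^{v_1 + \cdots + v_{i-1}}$ in $\mathrm{d}^{(n)}$ depends only on coordinates in positions $1, \ldots, i-1$, which for $i \leq m$ are exactly the same as in $\mathrm{d}^{(m)}$; hence the signs line up and assembling the pieces gives the desired identity. The argument is essentially bookkeeping; the only potential subtlety is the block-separation check in the $i \leq m$ case, which is the step I would write out most carefully.
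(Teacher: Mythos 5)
Your proof is correct and takes essentially the same approach as the paper's: for $i \leq m$ the face maps act only within the first $m$ circle directions and hence commute with the tensor factor $y_{(\mathbf{0},\bb)}$, while for $i > m$ the total differential $\mathrm{d}_i$ vanishes because the multi-degree is $1$ in that direction. You actually supply the block-separation bookkeeping and the sign check that the paper dispatches with a single ``Clearly,'' which is a reasonable amount of extra detail.
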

\begin{proof}
 Clearly
$
 \mathrm{d}_i(c_{(-,\mathbf{0})} \otimes y_{(\mathbf{0},\bb)}) =
 \mathrm{d}_ic_{(-,\mathbf{0})} 
\otimes y_{(\mathbf{0},\bb)}$   for   $0\leq i \leq m$.
But since the multi-degree of 
$c_{(-,\mathbf{0})} \otimes y_ {(\mathbf{0},\bb)} $
 is $\mathbf{V} =(\mathbf{W}, \mathbf{1}_{n-m}) \in \mathbb{N}^n$ 
 and whenever $v_i=1$, 
 $\mathrm{d}_i= d _{i,0}-  d_{i,1}=0$, we also get
\[
 \mathrm{d}_i(c_{(-,\mathbf{0})} \otimes y_{(\mathbf{0},\bb)}) = 0, 
 \text{ for } m < i \leq n. 
\] 
\end{proof}

This lemma also applies when $y_{(\mathbf{0}, \bb)}$ is replaced by
another multi-matrix that has 
more than one nontrivial term, as long as the nontrivial terms are
all in coordinates of the form 
$(\mathbf{0}_m,\bb)$ for $\bb$ in degree $\mathbf{1}_{n-m}$ and $\bb
\neq \mathbf{0}_{n-m}$. 
It has the following immediate corollary:
\begin{lem}[Orthogonal Moving Lemma]
  \label{lem:moving-2}
  Let $\bb$ be a coordinate in degree $\mathbf{1}_{n-m}$ such
  that $\bb \neq \mathbf{0}_{n-m}$. 
  Let $c, c'$ be elements in multi-degree $\mathbf{W} \in
  \mathbb{N}^m$. If $c \sim c'$ in multi-degree $\mathbf{W}$, then 
\[c_{(-, \mathbf{0}_{n-m})} \otimes y_{(\mathbf{0}_m,\bb)} \sim
  c'_{(-, \mathbf{0}_{n-m})} \otimes y_{(\mathbf{0}_m, \bb)} \]
   in multi-degree $(\mathbf{W}, \mathbf{1}_{n-m})$
\end{lem}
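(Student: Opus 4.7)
The plan is to invoke the preceding unnumbered lemma directly: it establishes that the assignment $c \mapsto c_{(-, \mathbf{0}_{n-m})} \otimes y_{(\mathbf{0}_m, \bb)}$ defines a chain map from $\mathrm{Tot}(\L^\Q_{T^m}(\Q[t],\Q))$ to $\mathrm{Tot}(\L^\Q_{T^n}(\Q[t],\Q))$. Since chain maps send boundaries to boundaries, they descend to well-defined maps on homology, so homologous cycles in the source must map to homologous cycles in the target.

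More concretely, I would write the argument as follows. By hypothesis $c \sim c'$ in multi-degree $\mathbf{W}$, meaning that $c - c' = \mathrm{d}(e)$ for some element $e$ in the total complex $\mathrm{Tot}(\L^\Q_{T^m}(\Q[t],\Q))$ of appropriate degree. Applying the chain map from the previous lemma, we obtain
\[
c_{(-, \mathbf{0}_{n-m})} \otimes y_{(\mathbf{0}_m, \bb)} - c'_{(-, \mathbf{0}_{n-m})} \otimes y_{(\mathbf{0}_m, \bb)} = (c - c')_{(-, \mathbf{0}_{n-m})} \otimes y_{(\mathbf{0}_m, \bb)} = \mathrm{d}\bigl(e_{(-, \mathbf{0}_{n-m})} \otimes y_{(\mathbf{0}_m, \bb)}\bigr),
\]
which exhibits the required homology in multi-degree $(\mathbf{W}, \mathbf{1}_{n-m})$.

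There is essentially no obstacle here; the work was done in proving that the tensoring-with-$y_{(\mathbf{0}_m, \bb)}$ map is genuinely a chain map. That proof relied on two ingredients already recorded: that $\mathrm{d}_i$ for $i \le m$ acts only on the $c$-factor, and that the higher differentials $\mathrm{d}_i$ for $m < i \le n$ vanish because the relevant multi-degree entries equal $1$ and $\Q[t]$ is commutative, forcing the cyclic differential to be zero. Once that lemma is in hand, the Orthogonal Moving Lemma is simply the statement that a chain map respects the equivalence relation ``homologous,'' so no further computation is required.
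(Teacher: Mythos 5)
Your proof is correct and matches the paper's intent exactly: the paper states the Orthogonal Moving Lemma as an "immediate corollary" of the preceding chain-map lemma, and your argument simply spells out why a chain map sends homologous cycles to homologous cycles. Nothing more is needed.
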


Conceptually, the moving lemmas tell us how to move the nontrivial
elements $x,y$ in certain 
multi-matrices  to lower coordinates. They are stated for a special case
for simplicity, but of course they work for 
any permulation of copies of $\mathbb{N}^n$ in the statement.
The split moving lemma says that  if we have $x_{\mathbf{v}}$ and $y_
{\mathbf{w}} $ where the coordinates share a $1$ in a particular
place, the $1$'s  can be moved to coordinate $0$ separately.  
The orthogonal moving lemma says that the $x$ in $x_{\mathbf{v}}$ and
the $y$ in $y_ {\mathbf{w}} $ can be moved 
separately if they are supported in orthogonal tori (that is, have
their nontrivial entries in different coordinates). 

\begin{prop} \label{prop:relations}
  Let $\one$ and $\two$ be two coordinates of degree $\mathbf{1}_n$.
\begin{enumerate}
\item[(1)] \label{item:1} If $\one$ and $\two$ are both $0$ in the $i$th
  place for some $1 \leq i \leq n$, then 
\[
 x_{\one} \cdot y_{\two} \sim 0. 
\]
  In particular, if $\one \neq \mathbf{1}_n$, then $x_{\one} \sim 0$.
\item[(2)] \label{item:2} In general,
\[x_{\one} \cdot y_{\two} \sim
  \sum_{\substack{\one' \leq \one, \two' \leq \two, \\ \one'+\two' =
      \mathbf{1}_n}} x_{\one'} \cdot y_{\two'}, \]
  where the sum is taken over all coordinates $\one'$ and $\two'$ such that
\begin{itemize}
\item They are place-wise no greater than
  $\one$ and $\two$ respectively;
\item They take $1$ in complementary places.
\end{itemize}
\item[(3)] \label{item:3} For $k \geq 1$ and $n \geq 1$, we have the
  following homologous relation: 
\[ (t^{k})_{\mathbf{1}_n} \sim
  \sum_{\substack{\two_1, \ldots, \two_k \neq \mathbf{0}_n, \\
      \two_1+\ldots+\two_k = \mathbf{1}_n}} 
  \prod_{i=1}^k t_{\two_i}\] 
  In particular, if $k=n$ and we let $\mathbf{e}_i$ denote the
  coordinate that has $1$ at the $i$th place and $0$ at other places, we get
\begin{equation}
\label{eq:interesting-class}
 ( t^{n})  _{\mathbf{1}_n} \sim n! \prod_{i=1}^n t_{\mathbf{e}_i}.
\end{equation}
Also, if $k>n$, this gives us 
\[ (t^{k})_{\mathbf{1}_n} \sim 0\]
\end{enumerate}
\end{prop}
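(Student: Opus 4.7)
The plan is to treat the three parts in sequence: (1) via a degeneracy argument, (2) by iterating the Split Moving Lemma with (1) handling the base case, and (3) by induction on $k$ using (2).

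For (1), if $v_i = 0$ then $x_{\mathbf{v}}$ places its only nontrivial entry at a multi-simplex of $T^n$ whose $i$th coordinate is the degenerate $1$-simplex $s_0(\ast) \in S^1_1$. Hence $x_{\mathbf{v}}$ is the image of the $i$th-direction degeneracy applied to the corresponding element of $\L^\Q_{T^{n-1}}(\Q[t];\Q)$, and so is degenerate. The same holds for $x_{\mathbf{v}} \cdot y_{\mathbf{w}}$ when $v_i = w_i = 0$, since then both nontrivial entries sit over $s_0(\ast)$ in the $i$th direction. Degenerate elements vanish in the normalized chain complex and so are nullhomologous.

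For (2), I would induct on the number $\ell$ of coordinates $i$ at which $v_i = w_i = 1$. In the base case $\ell = 0$ there are two subcases: if some coordinate has $v_i = w_i = 0$, then (1) gives $x_{\mathbf{v}} \cdot y_{\mathbf{w}} \sim 0$ while the right-hand sum is empty (the constraints $\mathbf{v}' \le \mathbf{v}$, $\mathbf{w}' \le \mathbf{w}$, $\mathbf{v}' + \mathbf{w}' = \mathbf{1}_n$ are unsatisfiable); otherwise $\mathbf{v} + \mathbf{w} = \mathbf{1}_n$ and the sum has only the term $x_{\mathbf{v}} \cdot y_{\mathbf{w}}$. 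For the inductive step, select a shared $1$-coordinate $i$, apply the Split Moving Lemma there to obtain two products each with $\ell - 1$ shared $1$-coordinates, and invoke the inductive hypothesis on each. A short check shows the two resulting index sets partition $\{(\mathbf{v}',\mathbf{w}')\colon \mathbf{v}' + \mathbf{w}' = \mathbf{1}_n\}$ according to whether $v'_i = 0$ or $w'_i = 0$.

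For (3), induct on $k$ with base case $k = 1$ trivial. For the inductive step, $(t^{k+1})_{\mathbf{1}_n} = t_{\mathbf{1}_n} \cdot (t^k)_{\mathbf{1}_n}$; expand $(t^k)_{\mathbf{1}_n}$ via the inductive hypothesis and then apply (2) iteratively to each product $t_{\mathbf{1}_n} \cdot \prod_i t_{\mathbf{w}_i}$, discarding terms with a shared-zero coordinate via (1). The surviving terms are indexed by ordered $(k+1)$-tuples of nonzero vectors in $\{0,1\}^n$ summing to $\mathbf{1}_n$, each appearing with multiplicity one. When $k = n$, these tuples are precisely the permutations of the standard basis $(\mathbf{e}_1, \ldots, \mathbf{e}_n)$, giving $n!$ copies of $\prod_i t_{\mathbf{e}_i}$; when $k > n$ no such tuples exist (each contributes at least $1$ to the coordinate sum), so $(t^k)_{\mathbf{1}_n} \sim 0$. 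The main obstacle is the combinatorial bookkeeping in the inductive step of (3): confirming that each ordered $(k+1)$-tuple arises with coefficient exactly one after iterated splittings and cancellations. A cleaner alternative would apply (2) symmetrically to the full $k$-fold product $(t_{\mathbf{1}_n})^k$ and argue directly that the expansion is indexed by ordered set partitions of $\{1,\ldots,n\}$ into $k$ nonempty blocks.
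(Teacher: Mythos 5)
Your parts~(1) and~(2) are correct and close to the paper's proof. For~(1) the paper uses exactly your degeneracy observation (the cycle is in the image of $s_{i,0}$, hence null-homologous). For~(2) you induct on the number $\ell$ of shared $1$-coordinates whereas the paper inducts on $|\one|+|\two|$, but the engine of both arguments is the same application of the Split Moving Lemma, so either induction variable works.

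Part~(3) has a genuine gap. You write $(t^{k+1})_{\mathbf{1}_n} = t_{\mathbf{1}_n}\cdot(t^k)_{\mathbf{1}_n}$ (true on the nose), then expand $(t^k)_{\mathbf{1}_n}$ to a homologous class $z$ via the inductive hypothesis, and implicitly conclude $t_{\mathbf{1}_n}\cdot(t^k)_{\mathbf{1}_n}\sim t_{\mathbf{1}_n}\cdot z$. That last step is not justified: the dot here is the degree-wise product of the $n$-simplicial ring in the fixed multidegree $\mathbf{1}_n$, not a chain-level product compatible with the total differential of the multi-complex. The total differential $\mathrm{d}=\sum_i(-1)^{\cdots}\sum_j(-1)^j d_{i,j}$ is an alternating sum of ring maps and hence not itself a ring map, so multiplying a boundary by the fixed cycle $t_{\mathbf{1}_n}$ (whose support is the full coordinate $\mathbf{1}_n$) need not yield a boundary. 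The paper sidesteps exactly this issue: it applies part~(2) \emph{first}, obtaining $(t^k)_{\mathbf{1}_n}\sim\sum_{\two_1\neq\mathbf{0}_n,\,\two_1+\one'=\mathbf{1}_n} t_{\two_1}\cdot(t^{k-1})_{\one'}$, where each pair $(\two_1,\one')$ has complementary (orthogonal) supports; then it invokes the Orthogonal Moving Lemma (Lemma~\ref{lem:moving-2}), which is precisely the statement that a homology relation may be multiplied through by a factor supported in the orthogonal torus. Because $\one'\neq\mathbf{1}_n$, the inductive hypothesis is applied in lower dimension $m=|\one'|<n$, which is why the paper runs a lexicographic induction on $(k,n)$ rather than on $k$ alone. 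Your proposal never appeals to Lemma~\ref{lem:moving-2}, and without it (or a proof that multiplication by $t_{\mathbf{1}_n}$ descends to homology, which is false in this form) the inductive step does not close. Your ``cleaner alternative'' of expanding $(t_{\mathbf{1}_n})^k$ all at once would work if you first established a $k$-factor version of part~(2) so that no intermediate homology relation ever needs to be multiplied by $t_{\mathbf{1}_n}$, but that version is itself an extra statement that needs an argument.
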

\begin{proof}
The class in \textit{(1)} is a cycle because everything is in multi-degree  $\mathbf{1}_n$ is a cycle; it is null-homologous because it is in the image of the degeneracy $s_{i,0}$ in the $i$th place. 

For \textit{(2)} we write $|\one|$ for the sum of the places of the vector $\one$. We
induct on $|\one|+|\two|$. Notice that a coordinate $\one$ of degree
  $\mathbf{1}_n$ is just a sequence of length 
  $n$ of  $0$'s and  $1$'s and $|\one|$ is just the number of $1$'s in it.

  For $|\one|+|\two| \leq n$, there are two cases: One is that $\one$
  and $\two$ are both $0$ in one 
  place. Then the claim holds because the right-hand side is the empty
  sum and the left-hand side is $0$ by part~\textit{(1)}. The other
  case is that $\one + \two = \mathbf{1}_n$.  Then the claim also
  holds   because the right-hand side has only one copy that is
  exactly the left-hand side. 

  Assume that the claim is true for $|\one|+|\two| \leq m$ where $m
  \geq n$ and suppose now $|\one|+|\two| = m+1$. Since $m+1 \geq n+1$,
  $\one$ and $\two$ have to be both $1$ in some place. Without loss of
  generality, we assume that  
\begin{equation*}
\one = (\mathbf{v_0},1), \ \two = (\mathbf{w_0},1) \text{ where }
\mathbf{v_0}, \mathbf{w_0} \leq \mathbf{1}_{n-1}. 
\end{equation*}
By the Split Moving Lemma (Lemma~\ref{lem:moving-1}), 
\begin{equation*}
x_{\one} \cdot y_{\two} \sim x_{(\mathbf{v_0},0)} \cdot y_{\two} +
x_{\one} \cdot y_{(\mathbf{w_0},0)}. 
\end{equation*}
Since $|(\mathbf{v_0},0)| + |\two| = |\one| + |(\mathbf{w_0},0)| = m$,
by inductive hypothesis we have that
\begin{align*}
   x_{\one} \cdot y_{\two} & \sim
  \sum_{\substack{\mathbf{v_0}' \leq \mathbf{v_0}, \two' \leq \two, \\
  (\mathbf{v_0}',0)+\two' = \mathbf{1}_n}} x_{(\mathbf{v_0}',0)} 
  \cdot y_{\two'} +
  \sum_{\substack{\one' \leq \one, \mathbf{w_0}' \leq \mathbf{w_0}, \\
  \one'+(\mathbf{w_0}',0) = \mathbf{1}_n}} x_{\one'} 
  \cdot y_{(\mathbf{w_0}',0)} \\
  & = \sum_{\substack{\one' \leq \one, \two' \leq \two, \\ \one'+\two'
  = \mathbf{1}_n}} x_{\one'} 
  \cdot y_{\two'}.
\end{align*}

For \textit{(3)} we order the pair $(k,n)$ by the lexicographical
ordering. We induct on $(k,n)$. When $k=1$, the 
claim is trivially true.

Suppose the claim is true for all pairs less than $(k,n)$ where $k \geq 2$.
Taking $\one = \two = \mathbf{1}_n$, $x=t$ and $y=t^{k-1}$ in
part~~\textit{(2)}, we get that 
\begin{equation}
  \label{eq:1}
  ( t^k)_{\mathbf{1}_n}  \sim
  \sum_{\two_1+\one' = \mathbf{1}_n} t_{\two_1}
  \cdot ( t^{k-1}) _{\one'} = 
  \sum_{\substack{\two_1 \neq \mathbf{0}_n \\\two_1+\one' = \mathbf{1}_n}} t_{\two_1}
  \cdot (t^{k-1}) _{\one'}.
\end{equation}
The second step above uses that $t_{\mathbf{0}_n} = 0$ because $t$ is
$0$ in the $\Q[t]$-module $\Q$. Let $m =|\one'|$. By the inductive
hypothesis, we have  
\begin{equation}
  \label{eq:2}
 ( t^{k-1}) _{\mathbf{1}_m} \sim \sum_{\substack{\two'_2, \ldots,
      \two'_{k} \neq \mathbf{0}_m, \\ \two'_2+\ldots+\two'_k =
      \mathbf{1}_m}} 
  \prod_{i=2}^{k} t_{\two'_i}
\end{equation}
For each $\two'_i$ which is a coordinate of degree $\mathbf{1}_m$, we
add in $0$ in places where 
$\one'$ is $0$ to make it a coordinate of degree
$\mathbf{1}_n$. Denote it by $\two_i$. Then the 
Orthogonal Moving Lemma (Lemma~\ref{lem:moving-2}), (\ref{eq:1}) and
(\ref{eq:2}) combine to 
\begin{equation*}
 ( t^{k} ) _{\mathbf{1}_n} \sim
  \sum_{\substack{\two_1, \ldots, \two_k \neq \mathbf{0}_n, \\
      \two_1+\ldots+\two_k = \mathbf{1}_n}} 
  \prod_{i=1}^k t_{\two_i}.
\end{equation*}
\end{proof}

For any $n \geq 2$, we call $t_{\mathbf{1}_n}$ the
\emph{diagonal class} and denote it by $\Delta_n$.  We call
$\prod_{i=1}^n t_{\mathbf{e}_i}$ the \emph{volume form} and denote it by
$\vol_n$. If we include $S^1\hookrightarrow T^n$ as the $i$th
coordinate and identify the first Hochschild homology 
group with the K\"ahler differentials, the generator $dt$ of
 $\HH_1^\Q(\Q[t]; \Q)$  maps to the generator
 we call $dt_i$ in the Loday construction of the torus. In this sense $\vol_n$
corresponds to the degree-$n$ class $dt_1 \wedge \ldots \wedge
dt_n$. 

\begin{proof}[Proof of Theorem \ref{thm:notstable}] 
By Equation \eqref{eq:interesting-class} we know that the map $t \mapsto t^n$
induces a map on $\pi_*\L^\Q_{T^n}(\Q[t]; \Q)$, that sends the diagonal
class, $\Delta_n$, to $n! \vol_n$. Hence, by Lemma
\ref{lem:modulestructure} we know that
\[ \dim_\Q\pi_n(\L_{T^n}^\Q(\Q[t]/t^n; \Q)) <
  \dim_\Q\pi_n(\L_{\bigvee_{k=1}^n\bigvee_{\binom{n}{k}}
    S^k}^\Q(\Q[t]/t^n; \Q)). \]
In particular,
\[ \pi_*(\L_{T^n}^\Q(\Q[t]/t^n; \Q)) \ncong
  \pi_*(\L_{\bigvee_{k=1}^n\bigvee_{\binom{n}{k}} S^k}^\Q(\Q[t]/t^n; \Q)). \] 
\end{proof}
\begin{rem}
  For the non-reduced Loday construction $\L^\Q_{T^n}(\Q[t])$,
  parts~~\textit{(1)} and ~\textit{(2)} of Proposition \ref{prop:relations} are 
  still true. Part~~\textit{(3)} will become
  \[ (t^{k})_{\mathbf{1}_n} \sim \sum_{\two_1+\ldots+\two_k = \mathbf{1}_n}
    \prod_{i=1}^k t_{\two_i} \]
  and Equation~(\ref{eq:interesting-class}) is no longer true. 
\end{rem}

\subsection{$\Q[t]/t^m$ on $T^n$ for $2 \leq m < n$.}
We know that for $\Q[t]/t^n$ we get a discrepancy between $\pi_n$ of the Loday
construction on the $n$-torus and that of the bouquet of spheres that
correspond to the cells of the
$n$-torus. We use this to first show that $\Q[t]/t^m$ causes a similar
discrepancy for $2 \leq m < n$. 

\begin{prop} \label{prop:smalltruncation}
  Let $2 \leq m \leq n$. Then
  \[ \pi_m \L^\Q_{T^n}(\Q[t]/t^m; \Q) \ncong
    \pi_m \L^\Q_{\bigvee_{k=1}^n\bigvee_{\binom{n}{k}} S^k}(\Q[t]/t^m; \Q).\]
\end{prop}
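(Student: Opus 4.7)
The plan is to mirror the argument of Theorem~\ref{thm:notstable}, detecting the discrepancy in total homotopy degree $m$ rather than $n$. Let $A_*=B_*=\pi_*\L^\Q_{T^n}(\Q[t];\Q)$. The same pushout presentation of $\Q[t]/t^m$ over $\Q[t]$ used before Theorem~\ref{thm:notstable} gives an equivalence
\[
\L^\Q_{T^n}(\Q[t]/t^m;\Q)\simeq \L^\Q_{T^n}(\Q[t];\Q)\wedge^L_{\L^\Q_{T^n}(\Q[t];\Q)} H\Q,
\]
whose associated Tor spectral sequence (collapsed, since we work over $\Q$) computes the left-hand $\pi_m$ as $(\Tor^{A_*,f_2}_{*,*}(B_*,\Q))_m$, with $f_2\colon A_*\to B_*$ the ring map induced by $t\mapsto t^m$. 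For the bouquet, the lemma preceding Theorem~\ref{thm:notstable} identifies the corresponding Tor as $(\Tor^{A_*,f_1}_{*,*}(B_*,\Q))_m$, where $f_1=\eta_{B_*}\circ\varepsilon_{A_*}$ factors through the augmentation. We are thus in the setting of Lemma~\ref{lem:modulestructure}.

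By Lemma~\ref{lem:modulestructure}, the strict inequality of dimensions in total degree $m$ follows once I produce a single element $x\in A_m$ of positive degree with $f_2(x)\neq 0$. I take $x=\Delta_m$, the diagonal class of the $m$-dimensional subtorus $T^m\subset T^n$ consisting of the first $m$ coordinates. In the multi-matrix notation of Section~\ref{sec:trunc}, $\Delta_m$ is the class $t_{\mathbf{1}_m}$ inside the sub-Loday-construction $\L^\Q_{T^m}(\Q[t];\Q)\hookrightarrow \L^\Q_{T^n}(\Q[t];\Q)$. Applying Proposition~\ref{prop:relations}(3) to $\L^\Q_{T^m}(\Q[t];\Q)$ (with its ambient torus dimension taken equal to $m$ and with $k=m$) yields
\[
f_2(\Delta_m)=(t^m)_{\mathbf{1}_m}\sim m!\,\vol_m = m!\prod_{i=1}^m t_{\mathbf{e}_i}.
\]
Since $m\geq 2$ and we work over $\Q$, $m!$ is a unit, and $\vol_m$ is a nonzero product of $m$ linearly independent degree-one generators in the tensor-factor decomposition $\pi_*\L^\Q_{T^n}(\Q[t];\Q)\cong\bigotimes_{k=1}^n\bigotimes_{\binom{n}{k}} gF_\Q(x_k)$. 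Hence $f_2(\Delta_m)\neq 0$.

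Feeding this into Lemma~\ref{lem:modulestructure} gives
\[
\dim_\Q \pi_m\L^\Q_{T^n}(\Q[t]/t^m;\Q) < \dim_\Q \pi_m\L^\Q_{\bigvee_{k=1}^n\bigvee_{\binom{n}{k}} S^k}(\Q[t]/t^m;\Q),
\]
so the two $\pi_m$'s are not isomorphic. The only genuinely subtle step is the naturality check that $t_{\mathbf{1}_m}$ and its image $m!\,\vol_m$ survive as nonzero classes under the inclusion $\L^\Q_{T^m}(\Q[t];\Q)\hookrightarrow\L^\Q_{T^n}(\Q[t];\Q)$; this is immediate because the bouquet tensor decomposition of $\pi_*\L^\Q_{T^n}(\Q[t];\Q)$ is compatible with inclusions of coordinate subtori, so the $m$ generators $t_{\mathbf{e}_1},\ldots,t_{\mathbf{e}_m}$ remain linearly independent in the ambient Loday construction and $\vol_m$ is a nonzero class there.
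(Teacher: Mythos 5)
Your proof follows essentially the same argument as the paper: use Lemma~\ref{lem:modulestructure} with the map induced by $t\mapsto t^m$, and detect nontriviality of $f_2$ in degree $m$ by observing that the diagonal class $\Delta_m$ of the coordinate subtorus $T^m\subset T^n$ maps to $m!\,\vol_m$ via Proposition~\ref{prop:relations}(3). You are slightly more explicit than the paper about why $m!\,\vol_m$ remains a nonzero class in the ambient $\pi_m\L^\Q_{T^n}(\Q[t];\Q)$, but the structure and key lemmas are identical.
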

\begin{proof}
  We consider the Tor-spectral sequence
  \[ \Tor_{*,*}^{\pi_*\L^\Q_{T^n}(\Q[t]; \Q)}(\pi_*\L^\Q_{T^n}(\Q[t];
    \Q), \Q) \Rightarrow \pi_*\L_{T^n}^\Q(\Q[t]/t^m; \Q)\] 
  where the $\pi_*\L^\Q_{T^n}(\Q[t]; \Q)$-module structure on
  $\pi_*\L^\Q_{T^n}(\Q[t]; \Q)$ is induced by $t \mapsto t^m$. The $m$-chain
  complex $C_*^{(m)} := \L^\Q_{T^m}(\Q[t]; \Q)$ can be considered as an
  $n$-chain complex whose $m+1, \ldots, n$-coordinates are trivial. Then
  \[ C_*^{(m)} = \L^\Q_{T^m}(\Q[t]; \Q) \hookrightarrow C_*^{(n)} :=
    \L^\Q_{T^n}(\Q[t]; \Q) \]
  is a sub-$n$-complex of $C_*^{(n)}$. We know that $\Delta_m \mapsto m!\vol_m$
  in the homology of the total complex of $C_*^{(m)}$ and hence the same is
  true in $C_*^{(n)}$. Therefore the map 
  \[ \pi_m\L^\Q_{T^n}(\Q[t]; \Q) \ra \pi_m\L^\Q_{T^n}(\Q[t]; \Q)\]
  that is induced by $t \mapsto t^m$ is nontrivial and by Lemma
  \ref{lem:modulestructure} the dimension of $\pi_m\L^\Q_{T^n}(\Q[t]/t^m; \Q)$
  is strictly smaller than the dimension of
  \[ \pi_m\L^\Q_{\bigvee_{k=1}^n \bigvee_{\binom{n}{k}} S^k}(\Q[t]/t^m; \Q). \]
\end{proof}

\subsection{Quotients by polynomials without constant term} 
Let $q(t) = a_mt^m + \ldots + a_1t \in \Q[t]$. Then we can still write
$\Q[t]/q(t)$ 
as a pushout
\[ \xymatrix{\Q[t] \ar[r]^{t \mapsto q(t)} \ar[d]_{t \mapsto 0}& \Q[t]
    \ar@{.>}[d]\\ 
\Q \ar@{.>}[r]& \Q[t]/q(t) } \]
hence the above methods carry over.

\begin{prop}
  Let $m_0$ be the smallest natural number with $1 \leq m_0 \leq m$ with
  $a_{m_0} \neq 0$. Then
  \[ \pi_{m_0}\L^\Q_{T^{m_0}}(\Q[t]/q(t); \Q) \ncong
     \pi_{m_0} \L^\Q_{\bigvee_{k=1}^{m_0}\bigvee_{\binom{m_0}{k}}
       S^k}(\Q[t]/q(t); \Q).\] 
 \end{prop}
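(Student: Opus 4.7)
The plan is to follow the strategy of Theorem \ref{thm:notstable} and Proposition \ref{prop:smalltruncation}, replacing $t^m$ by $q(t)$ and exploiting the fact that only the lowest-degree coefficient $a_{m_0}$ will matter for the diagonal class. First, I apply $\L^\Q_{T^{m_0}}(-;\Q)$ to the given pushout to obtain the $\Tor$-spectral sequence
\[
  E^2_{*,*} = \Tor_{*,*}^{\pi_*\L^\Q_{T^{m_0}}(\Q[t];\Q)}\bigl(\pi_*\L^\Q_{T^{m_0}}(\Q[t];\Q),\Q\bigr) \Rightarrow \pi_*\L^\Q_{T^{m_0}}(\Q[t]/q(t);\Q),
\]
with the left module structure induced by $t \mapsto q(t)$. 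The whole argument then hinges on applying Lemma \ref{lem:modulestructure} to this setup, which requires producing a positive-degree element on which $t \mapsto q(t)$ acts nontrivially.

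The natural candidate is the diagonal class $\Delta_{m_0} = t_{\mathbf{1}_{m_0}}$ in $\pi_{m_0}\L^\Q_{T^{m_0}}(\Q[t];\Q)$. By bilinearity of the tensor product, $(q(t))_{\mathbf{1}_{m_0}} = \sum_{k=m_0}^{m} a_k (t^k)_{\mathbf{1}_{m_0}}$, and part \textit{(3)} of Proposition \ref{prop:relations} gives both $(t^{m_0})_{\mathbf{1}_{m_0}} \sim m_0!\,\vol_{m_0}$ and $(t^k)_{\mathbf{1}_{m_0}} \sim 0$ for all $k > m_0$. Only the $a_{m_0}$-term therefore survives, so
\[
  \Delta_{m_0} \longmapsto a_{m_0}\cdot m_0! \cdot \vol_{m_0},
\]
which is nonzero because $a_{m_0} \neq 0$. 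Lemma \ref{lem:modulestructure} then yields the strict inequality of $\Tor$-dimensions in total degree $m_0$ between the $f_2$-module structure (given by $t \mapsto q(t)$) and the trivial one $f_1 = \eta \circ \varepsilon$; the $f_2$-version computes $\dim_\Q \pi_{m_0}\L^\Q_{T^{m_0}}(\Q[t]/q(t);\Q)$ by the spectral sequence above.

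To match the $f_1$-version with $\dim_\Q \pi_{m_0}\L^\Q_{\bigvee_{k=1}^{m_0}\bigvee_{\binom{m_0}{k}} S^k}(\Q[t]/q(t);\Q)$, I parallel the unnamed lemma that precedes Lemma \ref{lem:modulestructure}. The bouquet Loday construction splits via the homotopy pushout property as $\bigotimes_{k,\binom{m_0}{k}} \HH^{[k],\Q}(\Q[t]/q(t);\Q)$, and on each higher Hochschild factor the map induced by $t \mapsto q(t)$ acts on the indecomposable generator of $\HH^{[k],\Q}(\Q[t];\Q)\cong gF_\Q(x_k)$ by multiplication by $q'(0) = a_1 = 0$ (since $m_0 \geq 2$). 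Thus the corresponding bouquet $\Tor$ factors through the augmentation and coincides with $\Tor^{A_*,f_1}$, from which the desired dimension comparison---and hence the non-isomorphism---follows.

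The main potential obstacle is this last identification, namely that on each higher Hochschild factor $\HH^{[k],\Q}(\Q[t];\Q)$ the induced ring map multiplies the degree-$k$ generator by $a_1$. For $k=1$ this is the classical K\"ahler-differential computation $dt \mapsto q'(t)\,dt \equiv a_1\,dt \pmod{(t)}$; for $k \geq 2$ the degree-$k$ part of $gF_\Q(x_k)$ is one-dimensional, so the ring map must act by some scalar, and naturality in the polynomial ring (or direct chain-level inspection in the spirit of Proposition \ref{prop:relations}) identifies this scalar with $a_1$. Once that is in place, the already-established machinery of Lemma \ref{lem:modulestructure} delivers the result.
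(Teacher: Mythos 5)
Your core argument for $m_0 > 1$ is essentially the paper's: you send the diagonal class $\Delta_{m_0}$ to $a_{m_0}\,m_0!\,\vol_{m_0}$ via Proposition \ref{prop:relations}(3) and then invoke Lemma \ref{lem:modulestructure}. In fact your version is slightly cleaner: you correctly note that $(t^k)_{\mathbf{1}_{m_0}}\sim 0$ for $k>m_0$, so that only the $a_{m_0}$ term survives, whereas the paper's phrase ``terms of higher $t$-degree'' can be read as if those terms might not vanish. This is fine.

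There are, however, two gaps. First, you do not treat the case $m_0=1$, which the paper handles separately with the K\"ahler-differential computation $\varepsilon t\mapsto a_1\varepsilon t$; your entire last paragraph is explicitly conditioned on $m_0\geq 2$. (One may reasonably wonder about the $m_0=1$ case in the statement itself, since then both $T^{m_0}$ and the bouquet are just $S^1$, so the two sides are manifestly equal; but as stated you are leaving a case of the proposition uncovered.) Second, and more substantially: identifying $\pi_{m_0}$ of the bouquet side with $(\Tor^{A_*,f_1}_{*,*}(B_*,\Q))_{m_0}$ requires more than saying that $t\mapsto q(t)$ sends the generator $x_k$ of $\HH^{[k],\Q}(\Q[t];\Q)$ to $a_1 x_k=0$. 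Even granted that the module structure factors through the augmentation, you have only identified the $E^2$-page of the bouquet Tor-spectral sequence with $\Tor^{A_*,f_1}(B_*,\Q)$; you have not argued that this spectral sequence collapses, so a priori you only get $\dim_\Q\pi_{m_0}(\text{bouquet})\leq\dim_\Q(\Tor^{A_*,f_1})_{m_0}$, which is the wrong direction. To close this you need an analogue of the unnamed lemma before Lemma \ref{lem:modulestructure}, which in the paper rests on Proposition \ref{prop:spherestruncated} for $\Q[t]/t^m$. The cleanest route for $\Q[t]/q(t)$ with $m_0\geq 2$ is to observe that $\Q[t]/q(t)\cong\Q[t]/t^{m_0}\times\Q[t]/u(t)$ with $u(0)\neq 0$, and that the Loday construction with coefficients in $\Q$ at the basepoint (where $t\mapsto 0$) only sees the $\Q[t]/t^{m_0}$ factor, which lets you quote Proposition \ref{prop:spherestruncated} and the unnamed lemma verbatim. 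Your ``naturality identifies the scalar with $a_1$'' comment is also left as an assertion; it is plausible (it boils down to functoriality of the bar spectral sequence along $t\mapsto q(t)$), but as written it is not a proof.
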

\begin{proof}
  If $m_0 = 1$, then $\varepsilon t \in \HH_1^\Q(\Q[t];\Q)$ maps to
  $\varepsilon(q(t))\in \HH_1^\Q(\Q[t];\Q)$ under the map
  $t \mapsto q(t)$. In the module of K\"ahler differentials this
  element corresponds to
  \[ a_1dt + 2a_2tdt + \ldots + ma_mt^{m-1}dt\]
  but all these summands are null-homologous except for the first one. So
  $\varepsilon t \mapsto a_1 \varepsilon t \neq 0$ and this, along  with Lemma \ref{lem:modulestructure},
   proves the
  claim.

We denote by $\Delta_{m_0}(q(t))$ the element
$(q(t))_{\mathbf{1}_{m_0}}$.
If $m_0 > 1$, then the diagonal element $\Delta_{m_0}(t)$ maps to
  \[ \Delta_{m_0}(q(t)) = \sum_{i=m_0}^m a_i\Delta_{m_0}(t^i)\]
  and this is homologous to 
  \[ (m_0)!a_{m_0}\vol_{m_0} + \text{ terms of higher }
    t\text{-degree} \]
by \eqref{eq:interesting-class}. 
  Hence $\Delta_{m_0}(t)$ maps to a nontrivial element and again Lemma
  \ref{lem:modulestructure} gives the claim. 
 \end{proof}

\subsection{Truncated polynomial algebras in prime characteristic}
We know that for commutative Hopf algebras $A$ over $k$ the Loday
construction is 
stable, so Loday constructions of truncated polynomial algebras of the
form $\F_p[t]/t^{p^\ell}$ 
have the same homotopy groups when evaluated on an $n$-torus and on
the corresponding bouquet of 
spheres. However, we show that there is a discrepancy for truncated
polynomial algebras 
$\F_p[t]/t^n$ for $2 \leq  n < p$.

\begin{thm}
  Assume that $2 \leq n < p$ and $n \leq m$, then
  \[ \pi_*(\L_{T^m}^{\F_p}(\F_p[t]/t^n; \F_p)) \ncong
  \pi_*(\L_{\bigvee_{k=1}^m\bigvee_{\binom{m}{k}}
    S^k}^{\F_p}(\F_p[t]/t^n; \F_p)). \]
In particular for all $2 \leq n < p$ the pair $(\F_p[t]/t^n; \F_p)$ is
not stable.  
\end{thm}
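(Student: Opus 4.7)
The plan is to mimic the proof strategy used for $\Q[t]/t^n$ in Theorem \ref{thm:notstable} and Proposition \ref{prop:smalltruncation}, with the crucial observation that the hypothesis $n<p$ makes $n!$ invertible in $\F_p$, so that the key homological identity survives reduction mod $p$.

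First I would verify that the combinatorial Moving Lemmas (Lemmas \ref{lem:moving-1} and \ref{lem:moving-2}) and the homologous relations of Proposition \ref{prop:relations} hold verbatim when $\Q$ is replaced by $\F_p$. These statements are purely about the simplicial/$n$-simplicial structure of $\L^{\F_p}_{T^m}(\F_p[t];\F_p)$ viewed as an $m$-chain complex; they never use invertibility of any integer, since the boundary identifications come from degeneracies and face operations in the Loday construction. In particular, the identity
\begin{equation*}
(t^n)_{\mathbf{1}_n} \sim n!\,\vol_n
\end{equation*}
in $\pi_n\L^{\F_p}_{T^n}(\F_p[t];\F_p)$ remains valid, and because $2\leq n<p$ the factor $n!$ is a unit in $\F_p$. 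Therefore the diagonal class $\Delta_n$ maps to a \emph{nonzero} multiple of $\vol_n$ under the map $t\mapsto t^n$.

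Second, I would transport this fact to the $m$-torus exactly as in Proposition \ref{prop:smalltruncation}. Regarding $C^{(n)}_* = \L^{\F_p}_{T^n}(\F_p[t];\F_p)$ as a sub-$m$-complex of $C^{(m)}_* = \L^{\F_p}_{T^m}(\F_p[t];\F_p)$ whose last $m-n$ coordinates are trivial, the above relation shows that the map
\begin{equation*}
\pi_n\L^{\F_p}_{T^m}(\F_p[t];\F_p)\longrightarrow \pi_n\L^{\F_p}_{T^m}(\F_p[t];\F_p)
\end{equation*}
induced by $t\mapsto t^n$ is nontrivial, sending $\Delta_n$ to $n!\,\vol_n\neq 0$.

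Third, I would feed this into the pushout/Tor-spectral sequence argument that was set up for the rational case. Writing $\F_p[t]/t^n$ as the pushout of $\F_p\leftarrow\F_p[t]\xrightarrow{t\mapsto t^n}\F_p[t]$, we get
\begin{equation*}
\Tor_{*,*}^{\pi_*\L^{\F_p}_{T^m}(\F_p[t];\F_p)}\!\bigl(\pi_*\L^{\F_p}_{T^m}(\F_p[t];\F_p),\F_p\bigr)\Rightarrow \pi_*\L^{\F_p}_{T^m}(\F_p[t]/t^n;\F_p),
\end{equation*}
where the module structure on the second factor is twisted by $t\mapsto t^n$. The counterpart for the bouquet of spheres has an untwisted (augmentation-factoring) module structure because $\F_p[t]$ is smooth and hence stable. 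Lemma \ref{lem:modulestructure} was proved over $\Q$, but inspection of its argument shows that the only input is that the base is a field and that one has a class $x\in A_*$ of positive degree with $f_2(x)\neq 0$; both hold here for $x=\Delta_n$ and $w=n!\,\vol_n$ in $\F_p$. Applying (the $\F_p$-version of) the lemma gives
\begin{equation*}
\dim_{\F_p}\pi_n\L^{\F_p}_{T^m}(\F_p[t]/t^n;\F_p) < \dim_{\F_p}\pi_n\L^{\F_p}_{\bigvee_{k=1}^m\bigvee_{\binom{m}{k}} S^k}(\F_p[t]/t^n;\F_p),
\end{equation*}
which is the desired inequivalence and in particular shows that $(\F_p[t]/t^n;\F_p)$ is not stable.

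The main obstacle is checking that Lemma \ref{lem:modulestructure} and Proposition \ref{prop:relations} genuinely go through over $\F_p$ rather than just $\Q$; once the condition $n<p$ is used to ensure $n!$ is a unit, everything else is formal, and no new calculation is needed beyond noting that nothing in those arguments divided by an integer that becomes zero modulo $p$.
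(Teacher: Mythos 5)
Your proposal is correct and follows essentially the same route as the paper: observe that the moving lemmas and Proposition \ref{prop:relations} hold over $\Z$ (hence over $\F_p$), use $n<p$ to conclude that the diagonal class $\Delta_n$ maps to the nonzero element $n!\,\vol_n$, and then invoke the $\F_p$-analog of Lemma \ref{lem:modulestructure} together with the sub-complex argument of Proposition \ref{prop:smalltruncation} to pass from $T^n$ to $T^m$. The only thing the paper adds that you elide is the explicit identification of the relevant higher Hochschild groups $\HH_*^{[k],\F_p}(\F_p[t];\F_p)$ as iterated Tor-algebras involving divided power algebras, but this description is not logically required for the dimension-count argument you give, which only needs a positive-degree class hitting something nonzero.
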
  
\begin{proof}
  We consider the case $m = n$. The cases $n < m$ follow by an
  argument similar to that for Proposition \ref{prop:smalltruncation}.
  
  As $\F_p[t]$ is smooth over $\F_p$, we know that $\F_p \ra \F_p[t]$
  is stable, so that 
  \[ \pi_*(\L_{T^n}^{\F_p}(\F_p[t]; \F_p)) \cong
    \pi_*(\L_{\bigvee_{k=1}^n \bigvee_{\binom{n}{k}} S^k}^{\F_p}(\F_p[t]; \F_p))
  \cong \bigotimes_{k=1}^n \bigotimes_{\binom{n}{k}}
  \HH^{[k],\F_p}_*(\F_p[t];\F_p)\]  
and $\HH_*^{[k], \F_p}(\F_p[t];\F_p)$ is calculated in \cite[\S
8]{blprz} so that we obtain 
\[ \HH_*^{[k], \F_p}(\F_p[t];\F_p) \cong B'_{k+1}\]
where $B'_1 = \F_p[t]$ and $B'_{k+1} = \Tor_{*,*}^{B'_k}(\F_p, \F_p)$
where the grading on 
$ \Tor_{*,*}^{B'_k}(\F_p, \F_p)$ is the total grading. Thus in low
degrees this gives 
$\HH_*^{\F_p}(\F_p[t]; \F_p) \cong \Lambda_{\F_p}(\varepsilon t)$ with
$|\varepsilon t|=1$, 
$\HH_*^{[2],\F_p}(\F_p[t]; \F_p) \cong
\Gamma_{\F_p}(\varrho^0\varepsilon t)$  with 
$|\varrho^0\varepsilon t|=2$. As $\Gamma_{\F_p}(\varrho^0\varepsilon
t) \cong \bigotimes_{i\geq 0} 
\F_p[\varrho^k\varepsilon t]/(\varrho^k\varepsilon t)^p$ we can
iterate the result. 

Note that in $\HH_n^{[n], \F_p}(\F_p[t];\F_p)$ there is always an 
indecomposable generator of the 
form $\varepsilon \varrho^0 \ldots \varrho^0\varepsilon t$  or
$\varrho^0 \varepsilon \varrho^0 
\ldots \varrho^0\varepsilon t$ in degree $n$ and we call this
generator $\Delta_n$.  
We also obtain a volume class 
\[ \vol_n := \varepsilon t_1 \cdot \ldots \cdot \varepsilon t_n \in
  \pi_n\L_{S^1 \vee \ldots \vee S^1}^{\F_p}(\F_p[t]; \F_p) \hookrightarrow
  \pi_n\L_{T^n}^{\F_p}(\F_p[t]; \F_p). \]
  
The results from Proposition \ref{prop:relations} work over the
integers. If $n < p$, then $n!$ is invertible in $\F_p$ and therefore
the class $\Delta_n$ maps  to $n! \vol_n$. An argument analogous to
Lemma \ref{lem:modulestructure} finishes the proof.  
\end{proof}


\begin{thebibliography}{9999999}
\bibitem{aghkk}
Katharine Adamyk, Teena Gerhardt, Kathryn Hess, Inbar Klang, Hana Jia
Kong, \emph{Computational tools for twisted topological Hochschild
  homology of equivariant spectra}, preprint arXiv:2001.06602. 

\bibitem{abghl}
Vigleik Angeltveit, Andrew J.~Blumberg, Teena Gerhardt, Michael
A.~Hill, Tyler Lawson, and Michael A.~Mandell, \emph{Topological
  cyclic homology via the norm}, Doc. Math. 23 (2018), 2101--2163.

\bibitem{BRY} 
Yuri Berest, Ajay C.~Ramadoss and Wai-Kit Yeung, \emph{Representation
  homology of spaces and higher Hochschild homology},
Algebr. Geom. Topol. 19 (2019), no. 1, 281--339.

\bibitem{blprz}
Irina Bobkova, Ayelet Lindenstrauss, Kate Poirier, Birgit Richter,
Inna Zakharevich
\emph{On the higher topological Hochschild homology of $\mathbb{F}_p$
and commutative $\mathbb{F}_p$-group algebras},
Women in Topology: Collaborations in Homotopy Theory. Contemporary
Mathematics 641,  AMS, (2015), 97--122.

\bibitem{bhlprz}
Irina Bobkova, Eva H\"oning, Ayelet Lindenstrauss, Kate Poirier, Birgit Richter,
Inna Zakharevich, \emph{Splittings and calculational techniques for
  higher $\THH$} , Algebr. Geom. Topol.   19 (2019), no. 7, 3711--3753.

\bibitem{cannon} James W.~Cannon, \emph{Shrinking cell-like
    decompositions of manifolds. Codimension three},  
Ann. of Math. (2) 110 (1979), no. 1, 83--112. 

\bibitem{DT}
Bj{\o}rn Ian Dundas, Andrea Tenti, \emph{Higher Hochschild homology is
  not a stable invariant}, Math. Z. 290 (2018), no. 1-2, 145--154.


\bibitem{ekmm}
Anthony D.~Elmendorf, Igor Kriz, Michael A.~Mandell, J.~Peter May, 
\emph{Rings, modules, and algebras in stable homotopy theory}. 
With an appendix by M. Cole. Mathematical Surveys and Monographs, 47. 
American Mathematical Society, Providence, RI, (1997), xii+249



\bibitem{gj}
Paul G. Goerss, John F. Jardine, \emph{Simplicial homotopy theory},
Modern Birkh\"auser Classics, Birkh\"auser Verlag 2009, xv+510 pp. 



\bibitem{hhlrz}
Gemma Halliwell, Eva H\"oning, Ayelet Lindenstrauss, Birgit Richter,
Inna Zakharevich, \emph{Relative Loday constructions and applications
  to higher $\THH$-calculations}, Topology Appl. 235 (2018), 523--545.


\bibitem{hm}
Lars Hesselholt, Ib Madsen, \emph{On the $K$-theory of finite algebras
  over Witt vectors of perfect fields}, Topology 36 (1997), no. 1, 29--101. 


\bibitem{HSS}
Mark Hovey, Brooke Shipley, Jeff Smith, \emph{Symmetric spectra},
J. Amer. Math. Soc. 13 (2000), no. 1, 149--208.  

\bibitem{ll}
Michael Larsen, Ayelet Lindenstrauss, 
\emph{Cyclic homology of Dedekind domains}. $K$-Theory 6 (1992),
no. 4, 301--334.  

\bibitem{lr}
  Ayelet Lindenstrauss, Birgit Richter,
  \emph{Stability of Loday constructions}, preprint arXiv:1905.05619. 

\bibitem{loday}
Jean-Louis Loday, 
\emph{Cyclic Homology}, 
      Second edition. Grundlehren der Mathematischen Wissenschaften
      301. Springer-Verlag, Berlin, (1998), xx+513 pp. 

  
\bibitem{HTT}
Jacob Lurie, \emph{Higher Topos Theory}, Annals of Mathematics Studies
vol. 170, 2009. 




\bibitem{mm}
Michael A.~Mandell, J.~Peter May, \emph{Equivariant orthogonal spectra
  and $S$-modules}, Mem. Amer. Math. Soc. 159 (2002), no. 755, x+108 pp.


\bibitem{may}
J.~Peter May, \emph{Simplicial objects in algebraic topology},
Vol. 11. University of Chicago Press, 1992. 

\bibitem{gils}
J.~Peter May, \emph{The geometry of iterated loop spaces}, Lectures
Notes in Mathematics, Vol. 271. Springer-Verlag, Berlin-New York,
1972. viii+175 pp.  

\bibitem{nikolaus}
Thomas Nikolaus, \emph{The group completion theorem via localizations
  of ring spectra}, expository notes, available at
\url{https://www.uni-muenster.de/IVV5WS/WebHop/user/nikolaus/Papers/Group_completion.pdf}    



\bibitem{pira-hodge}
Teimuraz Pirashvili,
\emph{Hodge decomposition for higher order Hochschild homology},
Ann. Sci. 'Ecole Norm. Sup. (4) 33 (2000), no. 2, 151--179.

\bibitem{rsv}
Nima Rasekh, Bruno Stonek, Gabriel Valenzuela, \emph{Thom spectra,
  higher $\THH$  and tensors in $\infty$-categories}, preprint
arXiv:1911.04345. 

\bibitem{Schaper}
Christoph Schaper, \emph{Suspensions of affine arrangements},
Math. Ann. 309 (1997), no. 3, 463--473.  

\bibitem{segal}
Graeme Segal, \emph{Configuration-spaces and iterated loop-spaces},
Invent. Math. 21 (1973), 213--221. 

  
\bibitem{sch}
Christian Schlichtkrull, \emph{Higher topological Hochschild homology
  of Thom spectra}, 
J. Topol. 4 (2011), no. 1, 161--189.

\end{thebibliography}
\end{document}